\documentclass{amsart}
\usepackage[T1]{fontenc}
\usepackage[utf8]{inputenc}
\usepackage{lmodern}
\usepackage{microtype}
\usepackage[a4paper,margin=30mm]{geometry}
\usepackage{amsmath,amssymb,amsthm,mathtools}
\usepackage{enumitem}
\usepackage{xcolor}
\usepackage[numbers,sort&compress]{natbib}
\usepackage{aliascnt}
\usepackage[colorlinks=true,linkcolor=blue!55!black,citecolor=blue!55!black,urlcolor=blue!55!black]{hyperref}

\hypersetup{
  pdftitle={Cycle-Decorated Ribbon Complexes: Cut Coproducts and Alternating-Fence Positivity},
  pdfauthor={Pyuyi Chufeng Huang},
  pdfkeywords={noncommutative symmetric functions, ribbon Schur functions, equivariant homology, combinatorial species, cut coproduct, reflection length, alternating fences}
}

\newtheorem{theorem}{Theorem}[section]
\newaliascnt{lemma}{theorem}
\newtheorem{lemma}[lemma]{Lemma}
\aliascntresetthe{lemma}
\newaliascnt{proposition}{theorem}
\newtheorem{proposition}[proposition]{Proposition}
\aliascntresetthe{proposition}
\newaliascnt{corollary}{theorem}
\newtheorem{corollary}[corollary]{Corollary}
\aliascntresetthe{corollary}
\theoremstyle{definition}
\newaliascnt{definition}{theorem}
\newtheorem{definition}[definition]{Definition}
\aliascntresetthe{definition}
\newaliascnt{example}{theorem}
\newtheorem{example}[example]{Example}
\aliascntresetthe{example}
\theoremstyle{remark}
\newaliascnt{remark}{theorem}
\newtheorem{remark}[remark]{Remark}
\aliascntresetthe{remark}

\usepackage[nameinlink,capitalise,noabbrev]{cleveref}

\newcommand{\kk}{\mathbb{K}}
\newcommand{\QQ}{\mathbb{Q}}
\newcommand{\NSym}{\mathrm{NSym}}
\newcommand{\Sym}{\mathrm{Sym}}
\newcommand{\Des}{\operatorname{Des}}
\newcommand{\id}{\operatorname{id}}

\title[Cycle-Decorated Ribbon Complexes]
{Cycle-Decorated Ribbon Complexes:\\
Cut Coproducts and Alternating-Fence Positivity}
\author{Pyuyi Chufeng Huang}
\address{School of Mathematics, Sichuan University, Chengdu 610064, China}
\email{pyuyi233@gmail.com}
\subjclass[2020]{Primary 05E10; Secondary 05E05, 05A15, 06A07}
\keywords{noncommutative symmetric functions, ribbon Schur functions,
 equivariant homology, combinatorial species, cut coproduct, reflection length,
 alternating fences, order polynomials}
\date{}

\begin{document}
\raggedbottom

\begin{abstract}
Let $\alpha\models n$.  We define a two-variable specialization
$Z_\alpha(t,q)$ of the ribbon basis of noncommutative symmetric functions
from cycle enumerators of an ordinary permutation and a rooted permutation,
with $q$ recording reflection length.  We realize $n!Z_\alpha(t,q)$ as the
shifted bigraded Euler characteristic of an $\mathfrak S_n$-equivariant
ordered-set-partition complex.  When $\alpha$ has at most one odd part,
each total decoration determines a set of simultaneous factorization cuts,
and its fiber is the classical ribbon complex indexed by that cut set.  This
gives explicit nonnegative ribbon expansions for every bigraded homology
representation.

Organizing the complexes on labelled finite sets yields a counital differential
graded comonoid in the Cauchy monoidal category of species, whose cut
coproduct is compatible with the fiber decomposition.  At a factorization
cut, the induced map on top homology is injective; its cokernel has the
near-concatenation ribbon character, and the kernel of the aggregate reduced
cut coproduct on homology is $H_1$.

For the zigzag compositions $\delta_n$, $Z_{\delta_n}(t,-1)$ is the order
polynomial of the alternating fence.  We prove
\[
 n!Z_{\delta_n}(t,-u)\in\mathbb N[t,u]
\]
using coefficientwise nonnegative recurrences derived from a Riccati
equation.  The same cut data define a nonnegative factorization defect
$d=\lceil n/2\rceil-k-j$, which controls the homological support and makes
the Euler sign constant on each defect layer.  We also determine the full
defect-zero edge by explicit Frobenius-character formulas.
\end{abstract}

\maketitle

\section{Introduction}
\label{sec:introduction}

Ribbon functions connect Boolean inclusion--exclusion, descent
representations, and rank-selected homology.  For a nonempty composition
$\alpha\models n$, the ribbon element in the algebra of noncommutative
symmetric functions is
\[
 R_\alpha=\sum_{\beta\succeq\alpha}
 (-1)^{\ell(\alpha)-\ell(\beta)}H_\beta.
\]
Under abelianization, $R_\alpha$ maps to the ribbon Schur function
$r_\alpha$, which is the Frobenius characteristic of the corresponding
descent representation.  We refine this picture by two cycle statistics and
then study the zigzag compositions that encode alternating fences.

For $m\geq0$, write
\[
 m^+=\left\lceil\frac m2\right\rceil,
 \qquad m^-=\left\lfloor\frac m2\right\rfloor,
\]
and set
\[
 \mathsf h_m(t,q)
 =\frac1{m!}
   \prod_{i=1}^{m^-}(t+i)
   \prod_{j=0}^{m^+-1}(t+jq).
\]
The second product is the cycle enumerator of a permutation in
$\mathfrak S_{m^+}$, with $q$ recording its reflection length
$m^+-c(\sigma)$.  The first product is obtained from the cycle enumerator of a
permutation on $m^-+1$ letters with one distinguished letter.  Since the
complete functions freely generate $\NSym$, the assignments
$H_m\mapsto\mathsf h_m(t,q)$ define an algebra homomorphism $\phi_{t,q}$.
We write
\[
 Z_\alpha(t,q)=\phi_{t,q}(R_\alpha).
\]

\subsection*{Main results}

Our first result gives an equivariant homological realization of the ribbon
evaluation.  The chain groups are spanned by ordered set partitions whose
blocks carry an ordinary permutation and a rooted permutation.  Adjacent
blocks are merged by direct sum in the ordinary component and by gluing the
two root cycles in the rooted component.  Suppose that $\alpha$ has at most
one odd part.  For a total decoration $\theta$, let $F_\alpha(\theta)$ be the
set of cuts at which both components factor.  The fiber with total decoration
$\theta$ is the classical ribbon complex with cut set $F_\alpha(\theta)$.
Consequently, \cref{thm:total-decoration-ribbon-homology} gives
\begin{equation}
 \operatorname{ch}_{t,q}H_k(C_\bullet^{t,q}(\alpha))
 =\sum_{\substack{\theta\in\mathcal D_n\\
                   |F_\alpha(\theta)|=k-1}}
 t^{\deg_t\theta}q^{\deg_q\theta}r_{\gamma_\alpha(\theta)}.
 \label{eq:intro-homology-formula}
\end{equation}
Thus every bigraded homology representation is determined explicitly and is
Schur-positive.  Grouping by exact cut set gives the nonnegative multiplicity
polynomials in \cref{cor:exact-cut-decomposition}.

The same factorization data define a cut coproduct.  After the complexes are
organized as a species on finite label sets, the coproduct makes their direct
sum a counital differential graded comonoid in the Cauchy monoidal category.
It respects the total-decoration fibers.  At every factorization cut, the
aggregate label-set component induces a canonical injection of the
corresponding ribbon homology module into the induction product of the two
factor modules; the cokernel has the near-concatenation ribbon character.
The kernel on homology of the aggregate reduced coproduct is exactly $H_1$;
see
\cref{thm:total-decoration-cut-coproduct,cor:ribbon-cut-exact-sequence,cor:cut-primitive-homology}.

Our second result concerns the zigzag compositions
$\delta_{2N}=(2,\ldots,2)$ and
$\delta_{2N+1}=(2,\ldots,2,1)$.  At $q=-1$, their evaluations are the order
polynomials of alternating fences.  Retaining reflection length by writing
$q=-u$, \cref{thm:cycle-sign-coherence} proves
\begin{equation}
 n!Z_{\delta_n}(t,-u)\in\mathbb N[t,u].
 \label{eq:intro-bivariate-positivity}
\end{equation}
The proof starts from the hypergeometric differential equation for the even
kernel and converts it into two simultaneous recurrences with nonnegative
coefficients.  The odd case follows from a first-order relation between the
even and odd kernels.

The same total decoration controls the two parts of the paper.  Its
factorization cuts select the ribbon complex occurring in homology, while the
excess of ordinary cycles over the factorization intervals determines the
sign after the substitution $q=-u$.  More precisely, if
$\theta=(\sigma,\tau)$ contributes in homological degree $k$ and
$q$-degree $j$, then
\[
 \operatorname{fd}_n(\theta)
 =c(\sigma)-|F_{\delta_n}(\theta)|-1
 =\left\lceil\frac n2\right\rceil-k-j\geq0.
\]
The differential preserves total decoration and therefore splits the zigzag
complex by this factorization defect.  On each defect layer the shifted Euler
sign is constant, and \cref{thm:factorization-defect-layers} gives the support
condition
\[
 H_k^{i,j}(n)\neq0
 \quad\Longrightarrow\quad
 k+j\leq\left\lceil\frac n2\right\rceil\leq i+j.
\]
The defect-zero edge of this region is computed in
\cref{cor:defect-zero-edge} by explicit representation-valued formulas.  The
virtual Euler characteristic is not Schur-positive in general, already for
$n=4$, so the scalar positivity in \eqref{eq:intro-bivariate-positivity} does
not arise from coefficientwise positivity in the representation ring.

The Riccati recurrence also gives positive integral logarithmic coefficients
for the even zigzag series.  At $u=1$, they specialize to scaled order
polynomials of alternating circular fences; see
\cref{prop:connected-circular-deformation}.

\subsection*{Relation to earlier work}

Up to augmentation and homological reindexing, the undecorated complex is the
ordered-set-partition complex $\Delta_\alpha$ of Ehrenborg and Jung.  They
identify it with a rank-selected Boolean complex, prove concentration of its
homology, construct a basis indexed by exact descent composition, and identify
the top homology with the border-strip Specht module
\cite[Theorem~4.2, Theorem~6.6, and Proposition~7.3]{EhrenborgJung2013}.
Bergeron and Krob constructed acyclic complexes related to ribbon elements in
noncommutative symmetric functions \cite{BergeronKrob1997}.  After choosing an
$n$-dimensional vector space with an ordered basis, the multidegree
$(1,\ldots,1)$ part of VandeBogert's refinement complex for its symmetric
algebra has the same ordered-set-partition basis and adjacent multiplication
maps, up to homological reindexing
\cite[Definition~A.7 and Example~A.8]{KellerVandeBogert2025}.  In the present
complex, these classical ribbon complexes occur as fibers indexed by total
permutation decorations, and the simultaneous factorization cuts determine
both the fiber and its bidegree.

Chain-level categorifications of the ribbon concatenation and
near-concatenation identity occur in VandeBogert's ribbon Schur functors and,
for projective $0$-Hecke modules, in the canonical split exact sequences of
Almousa and Lu \cite{KellerVandeBogert2025,AlmousaLu2026}.  Here the injection
in \cref{cor:ribbon-cut-exact-sequence} is induced by a coassociative label-set
cut coproduct on $\mathfrak S_n$-equivariant complexes.  It restricts to each
total-decoration fiber and is compatible with simultaneous factorization at
every cut.  Hersh and Sundaram construct ribbon bases for
rank-selected homology and Whitney homology of geometric lattices
\cite[Theorems~5.29--5.30]{HershSundaram2026}.

Gessel and Reutenauer jointly enumerate cycle structure and descent sets of a
single permutation \cite{GesselReutenauer1993}.  Novelli, Thibon, and Toumazet
lift the cycle-index polynomials of symmetric groups to bases of
quasisymmetric and noncommutative symmetric functions and obtain a product
formula and a combinatorial-complex recurrence
\cite{NovelliThibonToumazet2020}.  In our construction, cycle statistics are
carried by the block decorations, while the descent set is supplied by the
homology of the ribbon fiber.  Direct-sum factorization of ordinary
permutations is standard; see
\cite[Equation~(34), Proposition~3.3, and the discussion before
Theorem~4.7]{HivertNovelliThibon2008}.  The construction below additionally uses a rooted component and requires
simultaneous factorization of the two components at the same cuts.

Kreweras's determinant for skew plane partitions \cite{Kreweras1965}, in the
form used by Ferroni, Morales, and Panova, gives the alternating-fence
specialization below.  Ferroni, Morales, and Panova prove coefficientwise
positivity for fence and circular-fence order polynomials
\cite{FerroniMoralesPanova}, and Kahane gives a permutation statistic for the
coefficients of every fence order polynomial
\cite[Theorem~3.8]{KahaneFenceCoefficients}.  The author's earlier preprint
gives a different record model for the same one-variable specialization and
for circular fences \cite{HuangGreedyRecords2026}.  These results concern
$u=1$; the refinement here retains reflection length and relates its sign to
the homological factorization defect.

\subsection*{Organization and conventions}

\Cref{sec:characters} defines the two-variable ribbon evaluation.
\Cref{sec:decorated-complex} constructs the decorated complex and computes its
equivariant homology.  \Cref{sec:staircase-consequences} proves the
alternating-fence specialization, bivariate positivity, the defect
decomposition, the support and defect-zero formulas, and the logarithmic
consequence.

All vector spaces are over a field $\kk$ of characteristic zero, and
$\mathbb N$ includes zero.  For $m\geq0$, write $[m]=\{1,\ldots,m\}$, with
$[0]:=\varnothing$; an integer interval $\{a,\ldots,b\}$ is empty when
$a>b$.  Polynomial characters take values in $\QQ[t,q]$.  Chain complexes
are homologically graded.  The superscript $t,q$ records the two weight
gradings; the differential does not depend on numerical values of $t$ and
$q$.  Compositions in \cref{sec:decorated-complex} are nonempty.  Empty
internal alphabets and cut sets are allowed.  Throughout, $\mathfrak S_n$
acts on the labels of ordered set partitions and fixes the decoration bases.

\section{Ribbon evaluations weighted by cycle statistics}
\label{sec:characters}

The ribbon basis turns coarsenings into inclusion--exclusion.  We evaluate
the complete generators by two cycle enumerators.

\subsection{Compositions, ribbons, and \texorpdfstring{\(\mathrm{NSym}\)}{NSym}}

A \emph{composition} of a positive integer \(n\), denoted
\(\alpha\models n\), is a sequence
\(\alpha=(\alpha_1,\ldots,\alpha_k)\) of positive integers with
\[
 |\alpha|:=\alpha_1+\cdots+\alpha_k=n.
\]
Its length is \(\ell(\alpha):=k\).  We also use the empty composition
\(\varnothing\models0\), of length zero.

A composition \(\gamma\) is a \emph{coarsening} of \(\alpha\) if it can be
obtained from \(\alpha\) by repeatedly replacing adjacent parts by their sum.
Write \(\operatorname{Coars}(\alpha)\) for all coarsenings of \(\alpha\),
including \(\alpha\).

Let \(\mathcal R:=\mathbb Q[t,q]\).  The algebra of noncommutative symmetric
functions over \(\mathcal R\) is the free associative algebra
\[
 \mathrm{NSym}_{\mathcal R}=\mathcal R\langle H_1,H_2,\ldots\rangle,
\]
where \(H_0:=1\) and \(\deg H_m=m\).
For \(\alpha=(\alpha_1,\ldots,\alpha_k)\), set
\(H_\alpha=H_{\alpha_1}\cdots H_{\alpha_k}\), with
\(H_\varnothing=1\).  We use the ribbon normalization
\begin{equation}
 R_\alpha
 =\sum_{\gamma\in\operatorname{Coars}(\alpha)}
   (-1)^{\ell(\alpha)-\ell(\gamma)}H_\gamma,
 \qquad R_\varnothing=1.
 \label{eq:ribbon-definition}
\end{equation}
These are standard conventions; see~\cite{GelfandEtAlNSym}.
Under the canonical abelianization $\NSym\to\Sym$, $R_\alpha$ maps to the
ribbon Schur function $r_\alpha$.

\subsection{The two-variable character}

For \(m\geq0\), put
\[
 m^+=\left\lceil\frac m2\right\rceil,
 \qquad
 m^-=\left\lfloor\frac m2\right\rfloor.
\]
Thus \(m^++m^-=m\).  Define
\begin{equation}
 \mathsf h_m(t,q)
 =\frac{1}{m!}
   \left(\prod_{i=1}^{m^-}(t+i)\right)
   \left(\prod_{j=0}^{m^+-1}(t+jq)\right),
 \qquad
 \mathsf h_0(t,q)=1.
 \label{eq:hm-tq}
\end{equation}

\begin{definition}[Two-variable character]
\label[definition]{def:cycle-q-character}
Let
\(\phi_{t,q}:\mathrm{NSym}_{\mathcal R}\rightarrow\mathcal R\) be the unique
unital $\mathcal R$-algebra
homomorphism determined by
\begin{equation*}
 \phi_{t,q}(H_m)=\mathsf h_m(t,q)
 \qquad(m\geq1).
\end{equation*}
For a composition \(\alpha\), define
\begin{equation}
 Z_\alpha(t,q)=\phi_{t,q}(R_\alpha),
 \qquad Z_\varnothing(t,q)=1.
 \label{eq:Z-alpha}
\end{equation}
\end{definition}

The map exists uniquely because \(\mathrm{NSym}_{\mathcal R}\) is freely generated by
the \(H_m\).  Here a character is a unital algebra homomorphism, as usual in
combinatorial Hopf algebra theory~\cite{AguiarBergeronSottile}.

The unsigned Stirling cycle enumerator
\cite[Section~1.3]{StanleyEC1}, after homogenization, gives for $r\geq1$
\begin{equation}
 \sum_{\sigma\in\mathfrak S_r}
 q^{\,r-c(\sigma)}t^{c(\sigma)}
 =\prod_{j=0}^{r-1}(t+jq),
 \label{eq:cycle-expansion}
\end{equation}
where \(c(\sigma)\) counts all cycles, including one-cycles.  Consequently,
\begin{equation*}
 m!\,\phi_{t,q}(H_m)
 =\left(\prod_{i=1}^{m^-}(t+i)\right)
  \sum_{\sigma\in\mathfrak S_{m^+}}
  q^{\,m^+-c(\sigma)}t^{c(\sigma)}.
\end{equation*}

Thus both factors in the definition have direct cycle interpretations, and
$r-c(\sigma)$ is the reflection length of $\sigma$ with respect to all
transpositions.

At \(q=-1\), the two products in~\eqref{eq:hm-tq} combine to give
the falling factorial $(t+m^-)^{\underline m}$, where
$x^{\underline m}=x(x-1)\cdots(x-m+1)$.  Hence
\begin{equation*}
 \phi_{t,-1}(H_m)
 =\binom{t+m^-}{m}
 =\binom{t+\lfloor m/2\rfloor}{m}.
\end{equation*}
Thus, writing \(\phi_t=\phi_{t,-1}\),
\begin{align}
 \phi_t(H_{2a})&=\binom{t+a}{2a},
 \label{eq:even-kernel}\\
 \phi_t(H_{2a+1})&=\binom{t+a}{2a+1}.
 \label{eq:odd-kernel}
\end{align}

Applying \(\phi_{t,q}\) to~\eqref{eq:ribbon-definition} gives
\begin{equation}
 Z_\alpha(t,q)
 =\sum_{\gamma\in\operatorname{Coars}(\alpha)}
  (-1)^{\ell(\alpha)-\ell(\gamma)}
  \prod_{j=1}^{\ell(\gamma)}\mathsf h_{\gamma_j}(t,q).
 \label{eq:Z-coarsening}
\end{equation}
Reversal bijects the coarsenings of \(\alpha\) with those of
\(\alpha^{\mathrm{rev}}\), preserves their lengths, and reverses their parts.
Since the scalar factors in~\eqref{eq:Z-coarsening} commute,
\begin{equation}
 Z_{\alpha^{\mathrm{rev}}}(t,q)=Z_\alpha(t,q).
 \label{eq:reversal-invariance}
\end{equation}

For later use, define the zigzag compositions
\begin{equation*}
 \delta_0=\varnothing,\qquad
 \delta_{2N}=(\underbrace{2,\ldots,2}_{N\text{ parts}}),\qquad
 \delta_{2N+1}=(\underbrace{2,\ldots,2}_{N\text{ parts}},1).
\end{equation*}
By \eqref{eq:reversal-invariance}, the last composition may equivalently be
written \((1,2,\ldots,2)\).

\section{The decorated ribbon complex}
\label{sec:decorated-complex}

Ordered set partitions realize the alternating sum over coarsenings at chain
level.  After recalling the undecorated ribbon complex, we establish the
factorization criterion and apply it to rooted permutation decorations.

Throughout this section, $\alpha\models n$ is nonempty, so $n\geq1$.
All representations are finite-dimensional.

\subsection{The classical ribbon complex}

\begin{definition}[Cut set and coarsening]
\label[definition]{def:cut-set-coarsening}
For a nonempty composition
$\alpha=(\alpha_1,\ldots,\alpha_r)\models n$, define its cut set by
\[
  D(\alpha):=
  \{\alpha_1,\alpha_1+\alpha_2,\ldots,
    \alpha_1+\cdots+\alpha_{r-1}\}\subseteq[n-1].
\]
For the coarsening relation defined in \cref{sec:characters}, write
$\beta\succeq\alpha$.  Equivalently,
\[
  \beta\succeq\alpha
  \quad\Longleftrightarrow\quad
  D(\beta)\subseteq D(\alpha).
\]
Write
\[
  \operatorname{Coars}_k(\alpha)
  :=\{\beta\succeq\alpha:\ell(\beta)=k\}.
\]
\end{definition}

Thus the interval of coarsenings of $\alpha$ is canonically a Boolean lattice
on the $r-1$ cuts in $D(\alpha)$.  If
$\beta=(\beta_1,\ldots,\beta_k)\succeq\alpha$ and $1\leq i<k$, let
\[
  m_i\beta
  :=(\beta_1,\ldots,\beta_{i-1},
      \beta_i+\beta_{i+1},
      \beta_{i+2},\ldots,\beta_k).
\]
Deleting one more cut shows that $m_i\beta\succeq\alpha$.

For $S\subseteq D(\alpha)$, let $\gamma_S$ denote the unique composition of
$n$ with $D(\gamma_S)=S$.

\paragraph{Ordered set partitions.}

\begin{definition}[Ordered set partitions]
\label[definition]{def:ordered-set-partitions}
Let $I$ be a finite set with $|I|=n$, and let
$\beta=(\beta_1,\ldots,\beta_k)\models n$.  Define
\[
  \operatorname{OSP}_{\beta}[I]
  :=\left\{
     (B_1,\ldots,B_k):
     \begin{array}{l}
       B_i\neq\varnothing,\ B_i\cap B_j=\varnothing\ (i\neq j),\\
       B_1\sqcup\cdots\sqcup B_k=I,\ |B_i|=\beta_i
     \end{array}
   \right\}.
\]
For $I=[n]$, let
\[
  M^{\beta}:=\kk[\operatorname{OSP}_{\beta}[n]]
\]
be the permutation module obtained by linearly extending the natural action of
$\mathfrak S_n$ on labels.
\end{definition}

The stabilizer of the standard ordered partition is the corresponding Young
subgroup.  Hence, for every $\beta\models n$,
\begin{align}
  |\operatorname{OSP}_{\beta}[n]|
  &=\binom{n}{\beta_1,\ldots,\beta_k}
    =\frac{n!}{\beta_1!\cdots\beta_k!},\notag\\
  M^{\beta}
  &\cong\operatorname{Ind}_{\mathfrak S_{\beta_1}\times\cdots\times
  \mathfrak S_{\beta_k}}^{\mathfrak S_n}\mathbf{1},
  \qquad
  \operatorname{ch}(M^{\beta})
  =h_{\beta}:=h_{\beta_1}\cdots h_{\beta_k}.
 \label{eq:ordered-partition-module}
\end{align}

\paragraph{The chain complex.}

For $\mathbf B=(B_1,\ldots,B_k)\in
\operatorname{OSP}_{\beta}[n]$, define
\[
  d_i\mathbf B
  :=(B_1,\ldots,B_{i-1},B_i\sqcup B_{i+1},
      B_{i+2},\ldots,B_k)
  \in\operatorname{OSP}_{m_i\beta}[n].
\]
Linear extension gives an $\mathfrak S_n$-equivariant map
\[
  d_i^{\beta}:M^{\beta}\longrightarrow M^{m_i\beta}.
\]

\begin{definition}[Ribbon complex]
\label[definition]{def:ribbon-complex}
For a nonempty composition $\alpha\models n$ with $r=\ell(\alpha)$, set
\[
  C_k(\alpha)
  :=\bigoplus_{\beta\in\operatorname{Coars}_k(\alpha)}M^{\beta},
  \qquad 1\leq k\leq r,
\]
and set $C_k(\alpha)=0$ outside this range.  On the summand $M^{\beta}$,
define
\begin{equation}
\label{eq:ribbon-boundary}
  \partial_k\big|_{M^{\beta}}
  :=\sum_{i=1}^{k-1}(-1)^{i-1}d_i^{\beta}.
\end{equation}
We call $(C_{\bullet}(\alpha),\partial)$ the \emph{ribbon complex} of
$\alpha$, and we place $C_k(\alpha)$ in homological degree $k$.  For Euler
characteristics we use the globally shifted sign $(-1)^{r-k}$, which agrees
with the usual ribbon expansion.
\end{definition}

Associativity of disjoint union gives the usual face identities
\[
 d_i d_j=d_{j-1}d_i\qquad(1\leq i<j\leq k-1).
\]
The two orders of deleting any pair of cuts therefore give equal maps with
opposite signs in \eqref{eq:ribbon-boundary}.  Hence $\partial^2=0$.

\paragraph{Homology.}

Let $\mathcal B_n$ be the Boolean lattice of subsets of $[n]$.  For
$D\subseteq[n-1]$, write
\[
  \mathcal B_n(D):=\{S\subseteq[n]: |S|\in D\}
\]
for its proper rank-selected subposet.

\begin{proposition}[Known ribbon-complex homology]
\label{prop:classical-rank-selected-homology}
Let $\alpha\models n$ be nonempty.  The map
\[
  (B_1,\ldots,B_k)
  \longmapsto
  B_1\subset B_1\sqcup B_2\subset\cdots\subset
  B_1\sqcup\cdots\sqcup B_{k-1}
\]
identifies $C_\bullet(\alpha)$, after the degree shift
$C_k\leftrightarrow\widetilde C_{k-2}$, with the augmented simplicial chain
complex of $\Delta(\mathcal B_n(D(\alpha)))$.  The identification is
$\mathfrak S_n$-equivariant.  The homology is concentrated in degree
$\ell(\alpha)$, and
\begin{align}
 \operatorname{ch}H_{\ell(\alpha)}(C_\bullet(\alpha))&=r_\alpha,\notag\\
 \dim H_{\ell(\alpha)}(C_\bullet(\alpha))
 &=\#\{\pi\in\mathfrak S_n:\Des(\pi)=D(\alpha)\}.
 \label{eq:classical-descent-homology}
\end{align}
\end{proposition}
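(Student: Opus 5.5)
The plan has three steps: identify the complex with the order complex, invoke Cohen--Macaulayness of rank-selected Boolean lattices, and compute the character by Euler characteristic.

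\textbf{Step 1: the identification.} Write $D(\alpha)=\{s_1<\dots<s_{r-1}\}$ and $r=\ell(\alpha)$. A coarsening $\beta\in\operatorname{Coars}_k(\alpha)$ corresponds to a $(k-1)$-subset $D(\beta)\subseteq D(\alpha)$. An ordered set partition of type $\beta$ corresponds bijectively to a chain $\varnothing\subsetneq A_1\subsetneq\cdots\subsetneq A_{k-1}\subsetneq[n]$ with $|A_i|\in D(\beta)$, via $A_i=B_1\sqcup\cdots\sqcup B_i$. Letting $\beta$ range over $\operatorname{Coars}_k(\alpha)$, these chains are exactly the $(k-2)$-simplices of $\Delta(\mathcal B_n(D(\alpha)))$, and $k=1$ gives the empty face. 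The face $d_i$ deletes $A_i$, so the sign $(-1)^{i-1}$ matches the simplicial boundary with vertices ordered by inclusion. Equivariance is clear because $\mathfrak S_n$ acts on labels on both sides.

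\textbf{Step 2: concentration.} $\mathcal B_n$ is a Cohen--Macaulay (indeed shellable) poset, and rank selection preserves Cohen--Macaulayness (Munkres; Baclawski; Stanley). Hence the reduced homology of $\Delta(\mathcal B_n(D))$ vanishes except in top dimension $|D|-1=r-2$. This is degree $r=\ell(\alpha)$ after the shift. One could alternatively cite \cite[Theorem~4.2]{EhrenborgJung2013} directly.

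\textbf{Step 3: the character.} Since homology is concentrated, the Euler characteristic with the shifted sign, together with \eqref{eq:ordered-partition-module}, gives
\[
 \operatorname{ch}H_r=\sum_{k}(-1)^{r-k}\operatorname{ch}C_k(\alpha)=\sum_{\beta\succeq\alpha}(-1)^{\ell(\alpha)-\ell(\beta)}h_\beta ,
\]
which is the abelianization of \eqref{eq:ribbon-definition}, namely $r_\alpha$. For the dimension, take the coefficient of $x_1\cdots x_n$ (equivalently $\langle\cdot,p_1^n\rangle$). Inclusion--exclusion then gives $\sum_{S\subseteq D}(-1)^{|D|-|S|}\#\{\pi:\Des(\pi)\subseteq S\}$, since $\#\{\pi:\Des\pi\subseteq D(\beta)\}$ equals the multinomial coefficient of $\beta$. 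By M\"obius inversion on the Boolean lattice this is $\#\{\pi:\Des\pi=D(\alpha)\}$.

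The main obstacle is Step~2. If a citation is not acceptable, I would prove concentration by an EL-shelling of $\mathcal B_n$ restricted to the ranks in $D$, or by Björner's result that rank-selected subposets of shellable posets are shellable. Alternatively, one can argue directly by induction on $|D|$: removing one rank produces a long exact sequence whose third term is a sum of joins of smaller rank-selected Boolean complexes. The remaining steps are bookkeeping.
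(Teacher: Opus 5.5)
Your argument is correct, and your Step~1 is exactly the paper's identification, including the empty chain as the augmentation term when $D(\alpha)=\varnothing$. The two proofs differ in what they import afterwards. The paper cites Ehrenborg--Jung for the remaining facts: concentration \cite[Theorem~4.2]{EhrenborgJung2013}, a basis indexed by permutations with descent set $D(\alpha)$ \cite[Theorem~6.6]{EhrenborgJung2013}, and the ribbon character of the top homology \cite[Proposition~7.3]{EhrenborgJung2013}. You import only the rank-selection theorem for Cohen--Macaulay (or shellable) posets, applied to $\mathcal B_n$. You then derive the character from the Euler--Poincar\'e principle in the representation ring together with $\operatorname{ch}M^\beta=h_\beta$. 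You derive the dimension by Boolean M\"obius inversion from $\#\{\pi\in\mathfrak S_n:\Des(\pi)\subseteq D(\beta)\}=\binom{n}{\beta_1,\ldots,\beta_k}$. This is more self-contained and proves exactly what is stated. The paper's citation additionally provides an explicit descent-indexed basis, which the statement itself does not need.

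One caution concerns your last fallback. Delete a cut $s\in D(\alpha)$. The summands $M^\beta$ with $s\notin D(\beta)$ span a subcomplex $K\cong C_\bullet(\alpha')$ with $D(\alpha')=D(\alpha)\setminus\{s\}$. The quotient $C_\bullet(\alpha)/K$ is a direct sum of tensor products of two smaller ribbon complexes. Induction places the homology of $K$ in degree $r-1$ and that of the quotient in degree $r$. The long exact sequence then gives only $H_{r-1}(C_\bullet(\alpha))\cong\operatorname{coker}\bigl(H_r(C_\bullet(\alpha)/K)\to H_{r-1}(K)\bigr)$. You would still have to prove that this connecting map is surjective, and that is not a formal consequence of the inductive hypothesis. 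The shelling alternatives do not have this problem.
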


\begin{proof}
The inverse sends a chain
$S_1\subset\cdots\subset S_{k-1}$ to the successive differences
$S_1,S_2\setminus S_1,\ldots,[n]\setminus S_{k-1}$.  Merging adjacent blocks
deletes the corresponding rank, so this is an equivariant chain
identification; when $D(\alpha)=\varnothing$, we use
$\widetilde C_{-1}(\varnothing)=\kk$.  The homology concentration, descent
basis, and ribbon character follow from
\cite[Theorem~4.2, Theorem~6.6, and Proposition~7.3]{EhrenborgJung2013}; see
also \cite[Theorem~4.3]{Stanley1982GroupsPosets} and
\cite[Theorem~3.4.4]{Wachs2007}.
\end{proof}

\subsection{Factorization along cuts}
\label{subsec:abstract-factorization}

We now give a criterion that decomposes a decorated ribbon complex into
subcomplexes with fixed total decoration.  Fix a nonempty composition
$\alpha\models n$.  For each integer $m$ that occurs as a block size in a
coarsening of $\alpha$, choose a finite set $\mathcal A_m$.  Equip these sets
with a bidegree
\[
 \operatorname{bideg}\colon\mathcal A_m\longrightarrow\mathbb N^2,
\]
whose coordinates we denote by $\deg_t$ and $\deg_q$.  Suppose that whenever
$a,b$ are
adjacent block sizes in such a coarsening there is a product
\[
  \diamond\colon\mathcal A_a\times\mathcal A_b
 \longrightarrow\mathcal A_{a+b}.
\]
We require
\[
 \operatorname{bideg}(x\diamond y)
 =\operatorname{bideg}(x)+\operatorname{bideg}(y)
\]
and
\((x\diamond y)\diamond z=x\diamond(y\diamond z)\) whenever the three
consecutive block sizes occur in a coarsening of $\alpha$.  Thus for every
$\beta=(\beta_1,\ldots,\beta_k)\succeq\alpha$ there is an unambiguous total
product
\begin{equation*}
 \mu_\beta\colon
 \mathcal A_{\beta_1}\times\cdots\times\mathcal A_{\beta_k}
 \longrightarrow\mathcal A_n,
\end{equation*}
where $\mu_{(n)}=\id_{\mathcal A_n}$.

Decorate an ordered set partition of type $\beta$ by one element of
$\mathcal A_{\beta_i}$ on each block.  Merging adjacent blocks and multiplying
their decorations defines a complex $C_\bullet^{\mathcal A}(\alpha)$ with
\begin{equation*}
 C_k^{\mathcal A}(\alpha)
 =\bigoplus_{\substack{\beta\succeq\alpha\\\ell(\beta)=k}}
 \kk[\operatorname{OSP}_\beta[n]]\otimes
 \kk[\mathcal A_{\beta_1}]\otimes\cdots\otimes
 \kk[\mathcal A_{\beta_k}],
\end{equation*}
with the signs of \eqref{eq:ribbon-boundary}.  The group $\mathfrak S_n$
acts on the labels of the ordered set partition and fixes all decorations.
Associativity gives the face identities and hence makes the boundary square
to zero.

\begin{definition}[Factorization condition]
\label[definition]{def:cut-factorizing-system}
The preceding decoration system has \emph{unique factorization along cuts
with respect to $\alpha$} if every map $\mu_\beta$ is injective and, for each
$\theta\in\mathcal A_n$, there is a set
$F_\alpha(\theta)\subseteq D(\alpha)$ such that
\begin{equation}
 \theta\in\operatorname{im}\mu_\beta
 \quad\Longleftrightarrow\quad
 D(\beta)\subseteq F_\alpha(\theta)
 \qquad(\beta\succeq\alpha).
 \label{eq:abstract-cut-factorization-condition}
\end{equation}
Write $\gamma_\alpha(\theta)$ for the composition of $n$ with cut set
$F_\alpha(\theta)$.
\end{definition}

Condition \eqref{eq:abstract-cut-factorization-condition} states that the
factors at all selected cuts exist simultaneously and are unique.

For a finite-dimensional bigraded $\mathfrak S_n$-module $V$, write
\[
 \operatorname{ch}_{t,q}V
 :=\sum_{d,e}\operatorname{ch}(V_{d,e})t^dq^e.
\]
For a finite-dimensional bigraded vector space $V$, write
\[
 \operatorname{Hilb}_{t,q}V:=\sum_{d,e}\dim(V_{d,e})t^dq^e.
\]

\begin{lemma}[Fiber decomposition by total decorations]
\label[lemma]{lem:factorization-fiber-decomposition}
If $\mathcal A$ has unique factorization along cuts with respect to $\alpha$,
then the
total product gives
a canonical bidegree-preserving, $\mathfrak S_n$-equivariant chain
isomorphism
\begin{equation}
 C_\bullet^{\mathcal A}(\alpha)
 \cong
 \bigoplus_{\theta\in\mathcal A_n}
 C_\bullet(\gamma_\alpha(\theta)).
 \label{eq:abstract-factorization-decomposition}
\end{equation}
The summand indexed by $\theta$ is placed in bidegree
$\operatorname{bideg}(\theta)$, and
\begin{equation}
 \operatorname{ch}_{t,q}H_k(C_\bullet^{\mathcal A}(\alpha))
 =\sum_{\substack{\theta\in\mathcal A_n\\
                   |F_\alpha(\theta)|=k-1}}
   t^{\deg_t\theta}q^{\deg_q\theta}
   r_{\gamma_\alpha(\theta)}.
 \label{eq:abstract-factorization-homology}
\end{equation}
 In particular, every bigraded homology Frobenius characteristic admits an
 expansion with coefficients in $\mathbb N[t,q]$ in ribbon Schur functions.
\end{lemma}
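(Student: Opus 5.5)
The plan is to split the decorated chain groups according to the total product of the block decorations, and then identify each piece with a classical ribbon complex.

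\emph{Grading by total decoration.} A basis element of $C_k^{\mathcal A}(\alpha)$ is a pair $(\mathbf B;x_1,\ldots,x_k)$ with $\mathbf B\in\operatorname{OSP}_\beta[n]$ and $x_i\in\mathcal A_{\beta_i}$. Assign it the total decoration $\theta=\mu_\beta(x_1,\ldots,x_k)\in\mathcal A_n$. The face map $d_i$ replaces $(x_i,x_{i+1})$ by $x_i\diamond x_{i+1}$. By associativity the total product is unchanged, so $\partial$ preserves $\theta$. The $\mathfrak S_n$-action fixes decorations, so it also preserves $\theta$. Bidegree is additive, so the bidegree of a basis element equals $\operatorname{bideg}(\theta)$. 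Hence $C_\bullet^{\mathcal A}(\alpha)=\bigoplus_\theta C_\bullet^\theta$ as equivariant bigraded complexes, with $C^\theta_\bullet$ concentrated in bidegree $\operatorname{bideg}(\theta)$.

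\emph{Identifying each fiber.} Fix $\theta$ and put $\gamma=\gamma_\alpha(\theta)$. By \eqref{eq:abstract-cut-factorization-condition}, the summand of $C^\theta_k$ indexed by $\beta$ is nonzero exactly when $D(\beta)\subseteq F_\alpha(\theta)$, that is, when $\beta\succeq\gamma$. Since $F_\alpha(\theta)\subseteq D(\alpha)$, every $\beta\succeq\gamma$ also satisfies $\beta\succeq\alpha$. Injectivity of $\mu_\beta$ says the tuple $(x_1,\ldots,x_k)$ with product $\theta$ is unique; call it $x^\beta(\theta)$. The candidate isomorphism $C_\bullet(\gamma)\to C_\bullet^\theta$ sends $\mathbf B\in\operatorname{OSP}_\beta[n]$ to $\mathbf B\otimes x^\beta(\theta)$. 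It is bijective on bases and equivariant. It commutes with $\partial$ because $d_i$ applied to $\mathbf B\otimes x^\beta(\theta)$ gives $d_i\mathbf B$ decorated by a tuple of type $m_i\beta$ with product $\theta$; by uniqueness this tuple is $x^{m_i\beta}(\theta)$. The signs $(-1)^{i-1}$ agree on both sides. This proves \eqref{eq:abstract-factorization-decomposition}.

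\emph{Homology.} Apply \cref{prop:classical-rank-selected-homology} to each $C_\bullet(\gamma)$. Its homology is concentrated in degree $\ell(\gamma)=|F_\alpha(\theta)|+1$ and has Frobenius characteristic $r_\gamma$. Summing over $\theta$ with weight $t^{\deg_t\theta}q^{\deg_q\theta}$ gives \eqref{eq:abstract-factorization-homology}. The coefficients lie in $\mathbb N[t,q]$ because each $\theta$ contributes a monomial with coefficient $1$.

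The only delicate step is the compatibility of the face maps with the chosen factorizations. It needs both uniqueness of factorization (injectivity of $\mu_{m_i\beta}$) and closure (that $m_i\beta\succeq\gamma$ again lies in the image). The equivalence in \eqref{eq:abstract-cut-factorization-condition} supplies both of these directly.
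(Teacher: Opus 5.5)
Your proposal is correct and follows the paper's proof. Both arguments split the complex by the total product, which faces preserve by associativity and the label action preserves trivially. Both then use injectivity of $\mu_\beta$ together with the cut condition to match the basis of the $\theta$-fiber with that of $C_\bullet(\gamma_\alpha(\theta))$, check that faces correspond, and read off homology from \cref{prop:classical-rank-selected-homology}.

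The only cosmetic difference is the direction of the isomorphism: the paper writes the forgetful map $\Psi(\mathbf B;x_1,\ldots,x_k)=\mathbf B$, while you write its inverse. Also, the ``closure'' you flag does not need the cut condition: the merged tuple already has product $\theta$ by associativity, so $\theta$ automatically lies in the image of $\mu_{m_i\beta}$.
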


\begin{proof}
Associativity implies that every face map preserves the total product.  On a
basis element of type $\beta$, define
\[
 \Psi(\mathbf B;x_1,\ldots,x_k)=\mathbf B
\]
in the summand indexed by
$\theta=\mu_\beta(x_1,\ldots,x_k)$.  Fixing $\theta$, injectivity of
$\mu_\beta$ gives at most one tuple $(x_1,\ldots,x_k)$, and
\eqref{eq:abstract-cut-factorization-condition} says that this tuple exists
exactly when $D(\beta)\subseteq F_\alpha(\theta)$.  These $\beta$ are
precisely the coarsenings of $\gamma_\alpha(\theta)$, so restriction of
$\Psi$ to the subcomplex with total decoration $\theta$ is a basis bijection onto
$C_\bullet(\gamma_\alpha(\theta))$.

Merging adjacent blocks replaces $(x_i,x_{i+1})$ by
$x_i\diamond x_{i+1}$ and does not change $\theta$; under $\Psi$ it is
therefore exactly the ordinary adjacent-union face.  Thus $\Psi$ is a chain
isomorphism.  Degree additivity and the fact that $\mathfrak S_n$ acts only
on the block labels prove the grading and equivariance assertions.
Summing the fibers proves \eqref{eq:abstract-factorization-decomposition},
and \eqref{eq:abstract-factorization-homology} follows from
\cref{prop:classical-rank-selected-homology}.
\end{proof}

\subsection{Permutation decorations}

For \(\sigma\in\mathfrak S_a\) and \(\tau\in\mathfrak S_b\), define
their direct sum \(\sigma\oplus\tau\in\mathfrak S_{a+b}\) by
\[
  (\sigma\oplus\tau)(j)=
  \begin{cases}
    \sigma(j),&1\leq j\leq a,\\
    a+\tau(j-a),&a<j\leq a+b.
  \end{cases}
\]
Direct inspection gives the equalities
\begin{equation}
\label{eq:direct-sum-properties}
  (\sigma\oplus\tau)\oplus\upsilon
  =\sigma\oplus(\tau\oplus\upsilon),
  \qquad
  c(\sigma\oplus\tau)=c(\sigma)+c(\tau).
\end{equation}

\paragraph{The decoration product.}

The two components of $\mathcal D_m$ account for the two factors in
\eqref{eq:hm-tq}: ordinary permutations record the $q$-weighted cycle
statistic, while rooted permutations record the second cycle count.

For $s\geq0$, let
\[
  \mathfrak R_s:=\mathfrak S_{\{\ast,1,\ldots,s\}}
\]
be the set of permutations of $s+1$ letters with distinguished letter
$\ast$.  We call its elements \emph{rooted permutations}.  The cycle
containing $\ast$ has a unique cyclic notation
$(\ast,a_1,\ldots,a_u)$ beginning at the root.
In particular, $\mathfrak R_0$ consists of the single rooted permutation
$(\ast)$.

\begin{definition}[Product of rooted permutations]
\label[definition]{def:root-gluing-product}
Let $\tau\in\mathfrak R_a$ and $\upsilon\in\mathfrak R_b$.  Write their root
cycles on the original alphabets as
\[
  (\ast,a_1,\ldots,a_u),
  \qquad
  (\ast,b_1,\ldots,b_v),
\]
and then shift every nonroot letter of $\upsilon$ by $a$.  Define
$\tau\star\upsilon\in\mathfrak R_{a+b}$ by replacing the two root cycles with
\[
  (\ast,a_1,\ldots,a_u,a+b_1,\ldots,a+b_v)
\]
and retaining every nonroot cycle of $\tau$, together with every nonroot cycle
of $\upsilon$ after the same shift by $a$.
\end{definition}

The sequences following the roots concatenate, and the nonroot cycles form a
disjoint union.
Therefore $\star$ is associative, the identity permutation on $\{\ast\}$ is
its unit, and
\begin{equation*}
  c(\tau\star\upsilon)=c(\tau)+c(\upsilon)-1.
\end{equation*}
The last identity holds because the two root cycles become one and every
other cycle remains unchanged.

For ordinary permutations we use the direct sum $\sigma\oplus\sigma'$ defined
above.  Retaining $m^+$ and $m^-$ from
\cref{sec:characters}, put
\[
  \mathcal D_m:=\mathfrak S_{m^+}\times\mathfrak R_{m^-}
  \qquad(m\geq1).
\]
Give a decoration $\xi=(\sigma,\tau)\in\mathcal D_m$ the bidegree
\begin{equation}
\label{eq:decoration-bidegree}
  \deg_q(\xi):=m^+-c(\sigma),
  \qquad
  \deg_t(\xi):=c(\sigma)+c(\tau)-1.
\end{equation}
Thus $\deg_q(\xi)$ is the reflection length of the ordinary component, while
$\deg_t(\xi)$ counts the ordinary cycles together with the nonroot cycles
of the rooted component.

If $a$ and $b$ are not both odd, then
$(a+b)^+=a^++b^+$ and $(a+b)^-=a^-+b^-$.  In this case define
\begin{equation}
\label{eq:strict-decoration-product}
  (\sigma,\tau)\diamond(\sigma',\tau')
  :=(\sigma\oplus\sigma',\tau\star\tau')
  \in\mathcal D_{a+b}.
\end{equation}

Let $A_m^{\mathrm{dec}}=\kk[\mathcal D_m]$ for $m\geq1$, let
$A_0^{\mathrm{dec}}=\kk 1$, with $1$ a two-sided unit of bidegree $(0,0)$, and extend
\eqref{eq:strict-decoration-product}
bilinearly by
\begin{equation}
 A_a^{\mathrm{dec}}\diamond A_b^{\mathrm{dec}}=0
 \qquad\text{when $a$ and $b$ are both odd}.
 \label{eq:decoration-odd-square-zero}
\end{equation}

The product $\diamond$ makes
$A^{\mathrm{dec}}=\bigoplus_{m\geq0}A_m^{\mathrm{dec}}$ a connected bigraded
associative algebra.  This algebra is generally noncommutative.  On every
nonzero product, both degrees in
\eqref{eq:decoration-bidegree} are additive:
\[
  \deg_q(\xi\diamond\xi')
  =\deg_q(\xi)+\deg_q(\xi'),
  \qquad
  \deg_t(\xi\diamond\xi')
  =\deg_t(\xi)+\deg_t(\xi').
\]
Indeed, on three factors containing at most one odd size, associativity
follows from \eqref{eq:direct-sum-properties} and the associativity of
$\star$.  If at least two sizes are odd, both parenthesizations vanish by
\eqref{eq:decoration-odd-square-zero}.  The cycle formulas and the equalities
$(a+b)^+=a^++b^+$ and $(a+b)^-=a^-+b^-$ for every nonzero product prove
degree additivity.

Let
\[
 A^{\mathrm{even}}=\bigoplus_{m\text{ even}}A_m^{\mathrm{dec}},
 \qquad
 A^{\mathrm{odd}}=\bigoplus_{m\text{ odd}}A_m^{\mathrm{dec}}.
\]
Then $A^{\mathrm{dec}}=A^{\mathrm{even}}\oplus A^{\mathrm{odd}}$, and
$A^{\mathrm{odd}}$ is a square-zero $A^{\mathrm{even}}$-bimodule.

\paragraph{The decorated complex.}

\begin{definition}[Ordered set partitions with permutation decorations]
\label[definition]{def:rooted-cycle-decorated-osp}
For an arbitrary composition $\beta=(\beta_1,\ldots,\beta_k)\models n$, let
\[
  \operatorname{ROSP}_{\beta}[n]
  :=\left\{(B_1,\ldots,B_k;\xi_1,\ldots,\xi_k):
       \begin{array}{l}
       (B_1,\ldots,B_k)\in\operatorname{OSP}_{\beta}[n],\\
       \xi_i\in\mathcal D_{\beta_i}
       \end{array}\right\}.
\]
Let $M^{\beta}_{t,q}$ be its linear span, bigraded by the sums of the local
$t$- and $q$-degrees, with $\mathfrak S_n$ acting on block labels and fixing
the abstract decorations.  Define linearly
\[
 d_i^{t,q}(\mathbf B;\xi_1,\ldots,\xi_k)
 :=\begin{cases}
 0,&\beta_i\text{ and }\beta_{i+1}\text{ are both odd},\\
 (d_i\mathbf B;
   \xi_1,\ldots,\xi_i\diamond\xi_{i+1},\ldots,\xi_k),&\text{otherwise}.
 \end{cases}
\]
Thus every nonzero face sends a basis element to a basis element.
\end{definition}

For every nonempty composition $\alpha\models n$, set
\[
  C_k^{t,q}(\alpha)
  :=\bigoplus_{\substack{\beta\succeq\alpha\\\ell(\beta)=k}}
       M^{\beta}_{t,q},
  \qquad
  \partial_k^{t,q}:=\sum_{i=1}^{k-1}(-1)^{i-1}d_i^{t,q}.
\]
Then $(\partial^{t,q})^2=0$, and the differential has bidegree $(0,0)$.
Indeed, associativity of union and $\diamond$ gives the face identities, so the
standard alternating cancellation gives $(\partial^{t,q})^2=0$.  The cycle
formulas show that the differential has bidegree $(0,0)$.  Thus
$C_{\bullet}^{t,q}(\alpha)$ is a bigraded complex of
$\mathfrak S_n$-modules.

The local bigraded enumerator is
\begin{equation*}
  Q_m(t,q)
  :=\sum_{(\sigma,\tau)\in\mathcal D_m}
       q^{m^+-c(\sigma)}t^{c(\sigma)+c(\tau)-1}.
\end{equation*}

For every $m\geq1$,
\begin{equation}
\label{eq:Q-factorization}
  Q_m(t,q)
  =\prod_{i=0}^{m^+-1}(t+iq)
     \prod_{j=1}^{m^-}(t+j).
\end{equation}
In particular,
\begin{equation}
\label{eq:Q-minus-one}
  Q_m(t,-1)
  =t^{\underline{m^+}}\prod_{j=1}^{m^-}(t+j)
  =(t+m^-)^{\underline m}
  =m!\binom{t+m^-}{m}.
\end{equation}
Indeed, the ordinary factor is the classical cycle enumerator
\eqref{eq:cycle-expansion}.  For a rooted permutation on $s+1$ letters, the
cycle containing the root always contributes a factor $t$.  Removing this
factor from the usual cycle enumerator leaves
$\prod_{j=1}^{s}(t+j)$.  This proves \eqref{eq:Q-factorization}.  At $q=-1$,
the factors combine to give \eqref{eq:Q-minus-one}.

Choosing the ordered set partition by \eqref{eq:ordered-partition-module}
and then its block decorations shows that the bigraded Hilbert polynomial of
the $k$-th chain group is
\begin{equation}
\label{eq:Phi-k-tq}
  \Phi_{\alpha,k}(t,q)
  =\sum_{\substack{\beta\succeq\alpha\\\ell(\beta)=k}}
     \binom{n}{\beta_1,\ldots,\beta_k}
     \prod_{j=1}^{k}Q_{\beta_j}(t,q).
\end{equation}

\subsection{Decomposition and equivariant homology}

A composition has at most one odd part if and only if all adjacent products
in all of its coarsenings are nonzero.  Indeed, if there are at least two odd
parts, choose two consecutive odd parts; every intervening part is even, and
absorbing those even parts into either side produces adjacent odd parts in a
coarsening.  Hence every iterated decoration product attached to a coarsening
of such a composition is nonzero.  In particular, the condition holds for
every zigzag composition $\delta_n$.

Fix $\alpha\models n$ with at most one odd part and let
$\theta=(\sigma,\tau)\in\mathcal D_n$.  For $c\in D(\alpha)$, call $c$ a
\emph{factorization cut} of $\theta$ if
\begin{enumerate}[label=\textup{(\alph*)},leftmargin=*]
\item $\sigma([c^+])=[c^+]$; and
\item every nonroot cycle of $\tau$ lies entirely in $[c^-]$ or entirely in
      $\{c^-+1,\ldots,n^-\}$, while the root cycle
      $(\ast,a_1,\ldots,a_u)$ has all letters at most $c^-$ before all
      letters greater than $c^-$.
\end{enumerate}
The two sides of the cut cannot both have odd size, so these conditions are
equivalent to the existence of unique decorations
$\theta_L\in\mathcal D_c$ and
$\theta_R\in\mathcal D_{n-c}$ such that
$\theta=\theta_L\diamond\theta_R$.  Set
\begin{equation*}
 F_\alpha(\theta)
 :=\{c\in D(\alpha):c\text{ is a factorization cut of }\theta\},
 \qquad
 \gamma_\alpha(\theta):=\gamma_{F_\alpha(\theta)}.
\end{equation*}
For the ordinary component, condition~(a) is direct-sum factorization at
$c^+$, also called shifted concatenation.  Connected permutations and their
factorization under this product are discussed in
\cite[Sections~3.1--3.2, especially Equation~(34) and
Proposition~3.3]{HivertNovelliThibon2008}.  The lemma below combines this
ordinary factorization with rooted factorization at the same set of cuts.

\begin{lemma}[Simultaneous factorization of permutation decorations]
\label[lemma]{lem:decoration-cut-factorization}
Let $\beta\succeq\alpha$.  Iterated multiplication defines an injection
\[
 \mu_\beta:
 \mathcal D_{\beta_1}\times\cdots\times\mathcal D_{\beta_k}
 \longrightarrow\mathcal D_n,
 \qquad
 (\theta_1,\ldots,\theta_k)\longmapsto
 \theta_1\diamond\cdots\diamond\theta_k,
\]
whose image is precisely
$\{\theta\in\mathcal D_n:D(\beta)\subseteq F_\alpha(\theta)\}$.
\end{lemma}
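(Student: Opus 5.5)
We work one component at a time, using that $\beta$ has at most one odd part. Consequently every partial sum $s_i=\beta_1+\cdots+\beta_i$ satisfies $s_i^+=\beta_1^++\cdots+\beta_i^+$ and $s_i^-=\beta_1^-+\cdots+\beta_i^-$. To see this, note that each partial product $\beta_1+\cdots+\beta_{i-1}$ and $\beta_i$ are never both odd, so the additivity $(a+b)^\pm=a^\pm+b^\pm$ applies inductively. Hence $\mu_\beta(\theta_1,\ldots,\theta_k)$ has the form $(\sigma_1\oplus\cdots\oplus\sigma_k,\ \tau_1\star\cdots\star\tau_k)$. Here the ordinary blocks occupy the consecutive intervals $(s_{i-1}^+,s_i^+]$ and the rooted blocks occupy $(s_{i-1}^-,s_i^-]$. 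The map is well defined and nonzero by the discussion preceding the lemma.

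For injectivity, we recover each factor from the product. On the ordinary side, $\sigma_i$ is the restriction of $\sigma$ to $(s_{i-1}^+,s_i^+]$, shifted down; this is well defined because the direct sum preserves that interval. On the rooted side, every nonroot cycle of $\tau_1\star\cdots\star\tau_k$ comes, after shifting, from exactly one $\tau_i$, so its letters lie in one interval $(s_{i-1}^-,s_i^-]$. The root cycle is $(\ast,w_1,\ldots,w_k)$, where $w_i$ is the shifted word following the root in $\tau_i$. Since the intervals are increasing and disjoint, $w_i$ is recovered as the maximal contiguous subword of letters lying in $(s_{i-1}^-,s_i^-]$. Each $\tau_i$ is then reconstructed by taking the root cycle $(\ast,w_i)$ together with the nonroot cycles inside the $i$th interval, and unshifting. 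Because the $\sigma_i$ and $\tau_i$ are determined by $\theta$, the tuple $(\theta_1,\ldots,\theta_k)$ is unique.

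For the image, we prove two inclusions.
\begin{itemize}
\item[($\subseteq$)] Take $\theta=\mu_\beta(\theta_1,\ldots,\theta_k)$ and a cut $c=s_i\in D(\beta)$. Then $\sigma$ preserves $[s_i^+]$, nonroot cycles avoid straddling $s_i^-$, and in the root word the letters at most $s_i^-$ (namely $w_1\cdots w_i$) precede the larger ones. Hence $c$ is a factorization cut, so $D(\beta)\subseteq F_\alpha(\theta)$.
\item[($\supseteq$)] Suppose every $c\in D(\beta)$ is a factorization cut of $\theta$. Condition (a) at all cuts means $\sigma$ preserves every $[s_i^+]$, hence every interval $(s_{i-1}^+,s_i^+]$, so $\sigma$ decomposes as $\sigma_1\oplus\cdots\oplus\sigma_k$. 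Condition (b) at all cuts means each nonroot cycle lies in a single interval $(s_{i-1}^-,s_i^-]$. It also means the root word, for every $i$, has all letters $\le s_i^-$ before all letters $>s_i^-$; together these make the root word the concatenation of its interval-subwords in order. Defining $\tau_i$ as in the injectivity step then gives $\tau=\tau_1\star\cdots\star\tau_k$. With $\theta_i=(\sigma_i,\tau_i)\in\mathcal D_{\beta_i}$ we obtain $\theta=\mu_\beta(\theta_1,\ldots,\theta_k)$.
\end{itemize}

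The main obstacle is keeping the bookkeeping of $c^\pm$ consistent. We must ensure that the cut positions seen by each component are exactly the partial sums of the $\beta_i^\pm$, so that the per-cut conditions (a) and (b), stated with $c^+$ and $c^-$ computed from $c$ alone, coincide with the block boundaries of the iterated product. This rests entirely on the one-odd-part additivity. A secondary point is the degenerate case $\beta_i^-=0$ (a block of size $1$). There $\tau_i=(\ast)$, the interval is empty, and $w_i$ is empty, and the argument goes through unchanged.
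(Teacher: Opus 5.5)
Your proposal is correct and follows essentially the same route as the paper's proof. Both use the one-odd-part additivity $c_j^\pm-c_{j-1}^\pm=\beta_j^\pm$, restrict $\sigma$ to the invariant intervals, recover each $\tau_j$ from the root-cycle subword on $J_j$ together with the nonroot cycles confined there, and check conversely that an iterated product factors at every $c_j$ (``partial product'' in your first paragraph should read ``partial sum'').
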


\begin{proof}
Put $c_0=0$ and $c_j=\beta_1+\cdots+\beta_j$ for $1\leq j\leq k$.
Because $\beta$ has at most one odd part, $c_{j-1}$ and $\beta_j$ cannot both
be odd.  Applying the ceiling and floor identities to
$c_j=c_{j-1}+\beta_j$ therefore gives
\[
 c_j^+-c_{j-1}^+=\beta_j^+,
 \qquad
 c_j^--c_{j-1}^-=\beta_j^-.
\]
Let
\[
 I_j=\{c_{j-1}^++1,\ldots,c_j^+\},
 \qquad
 J_j=\{c_{j-1}^-+1,\ldots,c_j^-\}.
\]

Suppose first that every $c_1,\ldots,c_{k-1}$ is a factorization cut of
$\theta=(\sigma,\tau)$.  The nested prefix conditions
$\sigma([c_j^+])=[c_j^+]$ imply that each difference interval $I_j$ is
$\sigma$-invariant.  Restricting $\sigma$ to $I_j$ and translating this
interval to $[\beta_j^+]$ uniquely recovers a permutation
$\sigma_j\in\mathfrak S_{\beta_j^+}$.

For the rooted component, every nonroot cycle of $\tau$ is confined by all
the boundaries $c_j^-$ to a unique interval $J_j$.  The conditions on the root
cycle at the nested prefix cuts are jointly equivalent to the following
order condition: whenever $i<j$, every letter of $J_i$ in the root cycle
precedes every letter of $J_j$ in that cycle.  Taking from the root cycle the
subword formed by letters in $J_j$, together with the nonroot cycles contained
in $J_j$, and translating $J_j$ to $[\beta_j^-]$ therefore gives a unique
$\tau_j\in\mathfrak R_{\beta_j^-}$.  If the subword is empty, its root
cycle is the one-cycle $(\ast)$.  By construction,
\[
 (\sigma,\tau)
 = (\sigma_1,\tau_1)\diamond\cdots\diamond
   (\sigma_k,\tau_k).
\]

Conversely, an iterated product has invariant ordinary intervals, confined
nonroot cycles, and subwords from successive intervals in the required order,
so every $c_j$ is a factorization cut.  The restrictions above also show
that the local factors are unique.  This proves both injectivity and the
asserted image characterization simultaneously for all selected cuts.
\end{proof}

\begin{example}[A small zigzag example]
\label[example]{ex:delta-four-fibers}
Let $\alpha=\delta_4=(2,2)$.  Its only cut is $2$, while the ordinary and
rooted alphabets both have size $2$.  Consider the total decorations
\[
 \theta_1=(\id_2,(\ast)(1)(2)),\qquad
 \theta_2=(\id_2,(\ast,2,1)),\qquad
 \theta_3=((12),(\ast)(1)(2)).
\]
For $\theta_1$, the ordinary prefix $[1]$ is invariant, the two nonroot cycles
lie on opposite sides of the cut, and the condition on the root cycle is
vacuous.  Hence $F_{\delta_4}(\theta_1)=\{2\}$, and its fiber is
$C_\bullet((2,2))$ in bidegree $(4,0)$.  For $\theta_2$, the ordinary
condition still holds, but the letters in the root cycle occur in the order
$2,1$; thus $F_{\delta_4}(\theta_2)=\varnothing$, and its fiber is
$C_\bullet((4))$ in bidegree $(2,0)$.  For $\theta_3$, the transposition does
not preserve $[1]$, so the fiber is again $C_\bullet((4))$, now in bidegree
$(3,1)$.  The defects defined in \eqref{eq:factorization-defect} are
respectively $0$, $1$, and $0$.
\end{example}

\begin{theorem}[Factorization and homology of the decorated complex]
\label{thm:total-decoration-ribbon-homology}
Let $\alpha\models n$ be nonempty, with at most one odd part.  There is a
canonical bidegree-preserving, $\mathfrak S_n$-equivariant chain isomorphism
\begin{equation*}
 C_\bullet^{t,q}(\alpha)
 \cong
 \bigoplus_{\theta\in\mathcal D_n}
 C_\bullet(\gamma_\alpha(\theta)),
\end{equation*}
where the summand indexed by $\theta$ is placed in bidegree
$(\deg_t\theta,\deg_q\theta)$.  Moreover,
\begin{equation}
 \operatorname{ch}_{t,q}H_k(C_\bullet^{t,q}(\alpha))
 =\sum_{\substack{\theta\in\mathcal D_n\\
                   |F_\alpha(\theta)|=k-1}}
   t^{\deg_t\theta}q^{\deg_q\theta}
   r_{\gamma_\alpha(\theta)}.
 \label{eq:total-decoration-homology-character}
\end{equation}
Thus every bigraded homology Frobenius characteristic admits a nonnegative
expansion in ribbon Schur functions and is therefore Schur-positive.  The
unique nonzero homology module contributed by $\theta$ occurs in degree
$k=|F_\alpha(\theta)|+1$ and has dimension
\begin{equation*}
 \#\{\pi\in\mathfrak S_n:\Des(\pi)=F_\alpha(\theta)\}.
\end{equation*}
\end{theorem}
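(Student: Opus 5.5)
The plan is to deduce the theorem from the abstract fiber decomposition, \cref{lem:factorization-fiber-decomposition}, applied to the decoration system $\mathcal A_m=\mathcal D_m$ with product $\diamond$ and the bidegree \eqref{eq:decoration-bidegree}. The first step is to check that $C^{t,q}_\bullet(\alpha)$ coincides with the complex $C^{\mathcal A}_\bullet(\alpha)$ of \cref{subsec:abstract-factorization}. Since $\alpha$ has at most one odd part, the preceding discussion shows that no coarsening $\beta\succeq\alpha$ contains two adjacent odd parts. Hence the vanishing clause in the definition of $d_i^{t,q}$ never applies, and every face is the honest merge-and-multiply map. Moreover, $\diamond$ is defined on $\mathcal A_a\times\mathcal A_b$ for all adjacent block sizes $a,b$ occurring in coarsenings of $\alpha$. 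It is associative on any three consecutive such sizes, since at most one of them is odd, by \eqref{eq:direct-sum-properties} and associativity of $\star$. Both degrees are additive on nonzero products, as recorded before \cref{def:rooted-cycle-decorated-osp}. So the hypotheses preceding \cref{def:cut-factorizing-system} hold, and the two complexes agree, including bigrading and $\mathfrak S_n$-action on labels.

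The second step is to verify unique factorization along cuts in the sense of \cref{def:cut-factorizing-system}, with $F_\alpha(\theta)$ the set of factorization cuts defined by conditions (a) and (b). This is exactly \cref{lem:decoration-cut-factorization}. It gives injectivity of each $\mu_\beta$ and shows that $\theta\in\operatorname{im}\mu_\beta$ if and only if $D(\beta)\subseteq F_\alpha(\theta)$, which is \eqref{eq:abstract-cut-factorization-condition}. The only point to watch is that the lemma's $F_\alpha(\theta)\subseteq D(\alpha)$ matches the abstract one, which holds by definition. This is where the real content sits, and I treat it as already established. If anything is delicate, it is confirming that the single-cut characterization of (a),(b) as "$\theta=\theta_L\diamond\theta_R$" is consistent with the nested-cut argument in that lemma. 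The lemma's proof already handles this via the intervals $I_j,J_j$.

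With both hypotheses in place, \cref{lem:factorization-fiber-decomposition} yields the canonical bidegree-preserving equivariant chain isomorphism and the character formula \eqref{eq:total-decoration-homology-character}, verbatim from \eqref{eq:abstract-factorization-homology}. For the remaining assertions I would invoke \cref{prop:classical-rank-selected-homology} on each fiber $C_\bullet(\gamma_\alpha(\theta))$. Its homology is concentrated in degree $\ell(\gamma_\alpha(\theta))=|F_\alpha(\theta)|+1$ with character $r_{\gamma_\alpha(\theta)}$. Its dimension equals the number of permutations with descent set $D(\gamma_\alpha(\theta))=F_\alpha(\theta)$. Schur positivity then follows because each ribbon Schur function $r_\gamma$ is Schur-positive, being the Frobenius characteristic of a genuine module (or by the Littlewood--Richardson/skew expansion). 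Coefficients $t^{\deg_t\theta}q^{\deg_q\theta}$ are monomials with coefficient $1$, so the sum lies in $\mathbb N[t,q]$-combinations of ribbons.
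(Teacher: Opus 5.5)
Your proposal is correct and follows the paper's proof: you verify unique factorization along cuts via \cref{lem:decoration-cut-factorization}, apply \cref{lem:factorization-fiber-decomposition}, and read off the degree and dimension statements fiber by fiber from \cref{prop:classical-rank-selected-homology}. The paper leaves implicit your check that the vanishing clause in $d_i^{t,q}$ never applies, so that $C_\bullet^{t,q}(\alpha)$ literally coincides with $C_\bullet^{\mathcal A}(\alpha)$, and it is the right thing to confirm.
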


\begin{proof}
By \cref{lem:decoration-cut-factorization}, the permutation decorations
satisfy the factorization condition in
\cref{def:cut-factorizing-system}.  Apply
\cref{lem:factorization-fiber-decomposition}.  The dimension statement is
\eqref{eq:classical-descent-homology} for each summand.
\end{proof}

Continue to assume that $\alpha$ has at most one odd part.  For
$S\subseteq D(\alpha)$, set
\begin{equation}
 G_{\alpha,S}(t,q):=
 \sum_{S\subseteq T\subseteq D(\alpha)}
 (-1)^{|T|-|S|}\prod_iQ_{(\gamma_T)_i}(t,q).
 \label{eq:exact-cut-polynomial}
\end{equation}

\begin{corollary}[Decomposition by exact cut sets]
\label[corollary]{cor:exact-cut-decomposition}
Suppose that $\alpha\models n$ is nonempty and has at most one odd part.
The polynomial in \eqref{eq:exact-cut-polynomial} belongs to
$\mathbb N[t,q]$, and
\begin{align}
 G_{\alpha,S}(t,q)
 &=\sum_{\substack{\theta\in\mathcal D_n\\F_\alpha(\theta)=S}}
   t^{\deg_t\theta}q^{\deg_q\theta},
 \label{eq:factorization-cut-positive-count}\\
 \operatorname{ch}_{t,q}H_k(C_\bullet^{t,q}(\alpha))
 &=\sum_{\substack{S\subseteq D(\alpha)\\|S|=k-1}}
   G_{\alpha,S}(t,q)r_{\gamma_S}.
 \label{eq:explicit-ribbon-homology}
\end{align}
In top degree, writing $r=\ell(\alpha)$,
\begin{equation}
 \operatorname{ch}_{t,q}H_r(C_\bullet^{t,q}(\alpha))
 =\left(\prod_{i=1}^rQ_{\alpha_i}(t,q)\right)r_\alpha.
 \label{eq:top-homology}
\end{equation}
For $\alpha=\delta_n$ with $n\geq1$, this specializes to
\begin{equation*}
 \operatorname{ch}_{t,q}H_{\ell(\delta_n)}
 (C_\bullet^{t,q}(\delta_n))
 =t^{\lceil n/2\rceil}(1+t)^{\lfloor n/2\rfloor}r_{\delta_n},
\end{equation*}
which lies in $q$-degree zero.
\end{corollary}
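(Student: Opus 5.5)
The plan is Möbius inversion on the Boolean lattice of subsets of $D(\alpha)$, using \cref{lem:decoration-cut-factorization} to count decorations by their cut sets.

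\textbf{Step 1: count decorations with prescribed cuts.} For $T\subseteq D(\alpha)$, let $E_T(t,q)$ be the bigraded enumerator of all $\theta\in\mathcal D_n$ with $T\subseteq F_\alpha(\theta)$. Apply \cref{lem:decoration-cut-factorization} with $\beta=\gamma_T$; since $\gamma_T\succeq\alpha$, it also has at most one odd part. The lemma makes $\mu_{\gamma_T}$ a bijection from $\prod_i\mathcal D_{(\gamma_T)_i}$ onto exactly this set. Bidegrees are additive under $\diamond$ on nonzero products, so
\[
 E_T=\prod_iQ_{(\gamma_T)_i}(t,q).
\]

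\textbf{Step 2: invert.} Let $N_S$ be the enumerator of all $\theta$ with $F_\alpha(\theta)=S$. Then $E_T=\sum_{S\supseteq T}N_S$. Möbius inversion on the Boolean lattice of subsets of $D(\alpha)$ gives
\[
 N_S=\sum_{T\supseteq S}(-1)^{|T|-|S|}E_T=G_{\alpha,S}.
\]
This proves \eqref{eq:factorization-cut-positive-count}. Since $N_S$ is a generating function of a finite set of decorations, membership in $\mathbb N[t,q]$ is immediate.

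\textbf{Step 3: homology and top degree.} Group the sum in \eqref{eq:total-decoration-homology-character} from \cref{thm:total-decoration-ribbon-homology} by $S=F_\alpha(\theta)$. Using $\gamma_\alpha(\theta)=\gamma_S$, this yields \eqref{eq:explicit-ribbon-homology}. For the top degree $k=r$, the only $S\subseteq D(\alpha)$ with $|S|=r-1$ is $D(\alpha)$ itself. Then $G_{\alpha,D(\alpha)}=E_{D(\alpha)}=\prod_iQ_{\alpha_i}$, which gives \eqref{eq:top-homology}.

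\textbf{Step 4: the zigzag specialization.} For $\delta_n$ the parts are $2$ (and possibly one $1$). Compute $Q_2$ from \eqref{eq:Q-factorization} with $m^+=m^-=1$: the ordinary product is $t$ alone and the rooted product is $t+1$, so $Q_2=t(t+1)$, which has no $q$. For $m=1$, $Q_1=t$. Multiplying over the parts gives $t^{\lceil n/2\rceil}(1+t)^{\lfloor n/2\rfloor}$.

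There is no real obstacle. The only point to check carefully is in Step 1: the bidegree of $\theta$ must equal the sum of the local bidegrees under $\mu_{\gamma_T}$. This holds because every product involved is nonzero when the composition has at most one odd part.
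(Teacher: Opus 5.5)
Your proposal is correct and takes the same route as the paper. Unique factorization plus degree additivity gives $\prod_i Q_{(\gamma_T)_i}(t,q)$ as the enumerator of decorations with $T\subseteq F_\alpha(\theta)$; Boolean M\"obius inversion yields the exact-cut polynomials; grouping the formula of \cref{thm:total-decoration-ribbon-homology} by exact cut set gives the homology and top-degree statements; and $Q_2=t(t+1)$, $Q_1=t$ handle the zigzag case.
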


\begin{proof}
Unique factorization and degree additivity show that the weight enumerator of
the decorations satisfying $T\subseteq F_\alpha(\theta)$ is
$\prod_iQ_{(\gamma_T)_i}(t,q)$.  Boolean M\"obius inversion gives
\eqref{eq:factorization-cut-positive-count}; grouping
\eqref{eq:total-decoration-homology-character} by exact cut set gives
\eqref{eq:explicit-ribbon-homology}.  At $S=D(\alpha)$, unique factorization
along all cuts gives the weight enumerator $\prod_iQ_{\alpha_i}$ and hence
\eqref{eq:top-homology}.  For $\delta_n$, use $Q_2=t(t+1)$ and $Q_1=t$.
\end{proof}

\subsection{The cut coproduct and total-decoration fibers}
\label{subsec:cut-coproduct}

The adjacent-merge differential admits a deconcatenation coproduct.  A cut at
a part boundary of $\alpha$ produces the corresponding prefix and suffix
compositions, so the natural object is the family over all compositions and
all finite label sets.

For a finite set $I$ with $|I|=n$, let $\mathfrak S_I$ denote its symmetric
group, and let $C_\bullet^{t,q}(\alpha)[I]$ denote the complex of
\cref{def:rooted-cycle-decorated-osp}, with $I$ in place of $[n]$.
For the empty composition, set
$C_0^{t,q}(\varnothing)[\varnothing]=\kk$ and set all other terms equal to
zero.  Relabeling the blocks along a bijection of finite sets makes
\begin{equation*}
 \mathcal C_\bullet[I]
 :=\begin{cases}
   \displaystyle\bigoplus_{\alpha\models |I|}
       C_\bullet^{t,q}(\alpha)[I],&I\neq\varnothing,\\[2mm]
   C_\bullet^{t,q}(\varnothing)[\varnothing],&I=\varnothing,
  \end{cases}
\end{equation*}
a species in bigraded chain complexes.  For species $\mathcal F$ and
$\mathcal G$, their Cauchy product is
\[
 (\mathcal F\mathbin{\cdot}\mathcal G)[I]
 =\bigoplus_{I=S\sqcup T}\mathcal F[S]\otimes\mathcal G[T],
\]
where the sum is over ordered decompositions; see
\cite{AguiarMahajan2010}.

Let $\alpha=(\alpha_1,\ldots,\alpha_r)$ and put
\[
 c_p=\alpha_1+\cdots+\alpha_p,\qquad
 \alpha_{\leq p}=(\alpha_1,\ldots,\alpha_p),\qquad
 \alpha_{>p}=(\alpha_{p+1},\ldots,\alpha_r),
\]
for $0\leq p\leq r$, with $c_0=0$ and with empty endpoint compositions.  Let
$I=S\sqcup T$ with $|S|=c_p$.  On a basis element
\[
 w=(B_1,\ldots,B_k;\xi_1,\ldots,\xi_k)
 \in C_k^{t,q}(\alpha)[I],
\]
define
\begin{equation}
 \Delta^p_{S,T}(w)=
 \begin{cases}
 (B_1,\ldots,B_j;\xi_1,\ldots,\xi_j)
 \otimes
 (B_{j+1},\ldots,B_k;\xi_{j+1},\ldots,\xi_k),
 \\
 \hfill\text{if }S=B_1\sqcup\cdots\sqcup B_j
       \text{ for some }0\leq j\leq k,\\[2mm]
 0,\hfill\text{otherwise}.
 \end{cases}
 \label{eq:cut-coproduct}
\end{equation}
The empty prefix and suffix are interpreted as the empty bar.  In the first
case, the two bars lie in
$C_\bullet^{t,q}(\alpha_{\leq p})[S]$ and
$C_\bullet^{t,q}(\alpha_{>p})[T]$, respectively.

For every ordered decomposition $I=S\sqcup T$, define the component
$\Delta_{S,T}:\mathcal C_\bullet[I]\to
\mathcal C_\bullet[S]\otimes\mathcal C_\bullet[T]$ on the summand indexed by
$\alpha$ by
\begin{equation*}
 \Delta_{S,T}\big|_{C_\bullet^{t,q}(\alpha)[I]}
 =\begin{cases}
   \Delta^p_{S,T},&|S|=c_p\text{ for some }0\leq p\leq r,\\
   0,&\text{otherwise}.
  \end{cases}
\end{equation*}
The index $p$, when it exists, is unique.  Let
$\Delta_I=\bigoplus_{I=S\sqcup T}\Delta_{S,T}$.  Let $\mathbf 1$
be the unit species, equal to $\kk$ on the empty set and zero otherwise, and
let $\epsilon:\mathcal C_\bullet\to\mathbf 1$ be the identity on
$\mathcal C_\bullet[\varnothing]=\kk$ and zero on nonempty sets.

\begin{proposition}[Species-level cut coproduct]
\label{prop:cut-coproduct}
Each map $\Delta^p_{S,T}$ is natural under relabeling, preserves the two weight
degrees, and is a degree-zero chain map
\begin{equation}
 \Delta^p_{S,T}:C_k^{t,q}(\alpha)[I]
 \longrightarrow
 \bigoplus_{a+b=k}
 C_a^{t,q}(\alpha_{\leq p})[S]\otimes
 C_b^{t,q}(\alpha_{>p})[T],
 \label{eq:cut-coproduct-chain-map}
\end{equation}
where the target has the usual tensor-product differential.  The natural
transformation
\[
 \Delta:\mathcal C_\bullet\longrightarrow
 \mathcal C_\bullet\mathbin{\cdot}\mathcal C_\bullet
\]
is coassociative and counital.  Explicitly, for every ordered decomposition
$I=R\sqcup S\sqcup T$,
\begin{equation}
 (\Delta_{R,S}\otimes\id)\Delta_{R\sqcup S,T}
 =(\id\otimes\Delta_{S,T})\Delta_{R,S\sqcup T}.
 \label{eq:cut-coproduct-coassociativity}
\end{equation}
The endpoint components satisfy
\begin{equation}
 (\epsilon\otimes\id)\Delta_{\varnothing,I}=\id_{\mathcal C_\bullet[I]},
 \qquad
 (\id\otimes\epsilon)\Delta_{I,\varnothing}=\id_{\mathcal C_\bullet[I]}.
 \label{eq:cut-coproduct-counit}
\end{equation}
Thus $(\mathcal C_\bullet,\Delta,\epsilon)$ is a counital differential graded
comonoid in the Cauchy monoidal category of species.
\end{proposition}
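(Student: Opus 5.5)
The proof is a direct basis computation, since every map involved sends basis elements to basis elements or to zero.

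\emph{Well-definedness, naturality and gradings.} If $S=B_1\sqcup\cdots\sqcup B_j$, then $|S|=c_p$ forces the block sizes $(\beta_1,\ldots,\beta_j)$ to form a coarsening of $\alpha_{\leq p}$, because every cut of $\beta$ is a cut of $\alpha$ and $c_p$ is among those cuts. Likewise the remaining sizes form a coarsening of $\alpha_{>p}$. Hence the two bars are basis elements of the stated complexes, with $a=j$ and $b=k-j$. Naturality under relabeling is immediate, because bijections act on block labels and fix decorations. The weight degrees are preserved because the bidegree of a basis element is the sum of the local bidegrees, and these are simply distributed between the two factors.

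\emph{Chain map.} Take a basis element $w$ of type $\beta$ and compute $\Delta^p_{S,T}(\partial w)$ face by face; the computation splits into three cases.
\begin{enumerate}[label=\textup{(\roman*)},leftmargin=*]
\item If $S$ is not a union of initial blocks of $w$, then it is not one for any face $d_iw$ either, since merging only removes unions. Both sides are zero.
\item If $S=B_1\sqcup\cdots\sqcup B_j$, the faces $d_i$ with $i<j$ give exactly the terms of $\partial$ applied to the prefix. Their signs $(-1)^{i-1}$ agree.
\item The faces $d_i$ with $i>j$ give the suffix terms. For these the sign $(-1)^{i-1}=(-1)^{j}(-1)^{(i-j)-1}$ matches the Koszul sign $(-1)^{a}$ with $a=j$ in the tensor differential.
\end{enumerate}
The face $d_j$ merges $B_j$ with $B_{j+1}$. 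After this merge, $S$ is no longer an initial union, so that face contributes zero, and nothing on the right corresponds to it. This also covers vanishing faces: if $d_i$ is zero because two adjacent odd blocks are merged, the corresponding face in the prefix or suffix complex is zero by the same definition. The main point requiring care is exactly this matching of the face $d_j$ and of the Koszul sign. I expect no real obstacle beyond careful bookkeeping of the endpoints $j=0$ and $j=k$, where one side is the empty bar.

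\emph{Coassociativity and counit.} For $I=R\sqcup S\sqcup T$, both sides of \eqref{eq:cut-coproduct-coassociativity} send $w$ to the triple splitting $(B_1..B_i)\otimes(B_{i+1}..B_j)\otimes(B_{j+1}..B_k)$ when $R=B_1\sqcup\cdots\sqcup B_i$ and $R\sqcup S=B_1\sqcup\cdots\sqcup B_j$, and to zero otherwise. This holds because the conditions ``$R\sqcup S$ is an initial union and $R$ is an initial union of that prefix'' and ``$R$ is an initial union and $S$ is an initial union of the remaining suffix'' are equivalent. The composition indices must also match: the first side cuts $\alpha$ at $p'$ with $c_{p'}=|R\sqcup S|$ and then cuts $\alpha_{\leq p'}$ at $|R|$, while the second side cuts in the other order, and these agree. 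If either cardinality fails to be a partial sum of $\alpha$, both sides vanish. For the counit, $\Delta_{\varnothing,I}(w)=\varnothing\otimes w$ with $j=0$, and $\epsilon$ sends the empty bar to $1$, which gives \eqref{eq:cut-coproduct-counit}. The other counit identity is the symmetric case $j=k$. Together these give the comonoid structure in the Cauchy category.
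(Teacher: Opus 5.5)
Your proposal is correct and follows the paper's proof essentially step for step. Both arguments use the same three-case face analysis: initial unions cannot be created by merging, the Koszul sign $(-1)^j(-1)^{i-j-1}=(-1)^{i-1}$ matches for suffix faces, and the face $d_j$ straddling the cut is killed by $\Delta^p_{S,T}$. Both also prove coassociativity via the common triple splitting and the counit via the endpoint cuts $j=0$ and $j=k$. Your explicit checks that the two bars are basis elements of $C^{t,q}_\bullet(\alpha_{\leq p})[S]$ and $C^{t,q}_\bullet(\alpha_{>p})[T]$, and that vanishing odd--odd faces correspond on both sides, are details the paper leaves implicit.
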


\begin{proof}
Suppose first that $S$ is not a union of initial blocks of $w$.  Merging
adjacent blocks deletes a block boundary and cannot create such an initial
union, so both sides of the chain-map identity vanish on $w$.

Now suppose that $S=B_1\sqcup\cdots\sqcup B_j$.  A face with index $i<j$
acts in the left tensor factor.  A face with index $i>j$ acts in the right
tensor factor, and its tensor-product sign is
\[
 (-1)^j(-1)^{i-j-1}=(-1)^{i-1},
\]
the sign of the same face in the source.  The face with index $i=j$ merges a
block in $S$ with a block in $T$.  Its image under $\Delta^p_{S,T}$ is zero,
and there is no corresponding term in the tensor-product differential.  This
proves \eqref{eq:cut-coproduct-chain-map}; naturality and bidegree preservation
are immediate.

For $I=R\sqcup S\sqcup T$, restrict both sides of
\eqref{eq:cut-coproduct-coassociativity} to a composition summand.  They vanish
unless $R$ and $R\sqcup S$ have sizes equal to two part boundaries of that
composition and are unions of the corresponding initial blocks.  When these
conditions hold, both sides return the same three consecutive bars.  This
proves coassociativity.  The two endpoint cuts give
\eqref{eq:cut-coproduct-counit}.
\end{proof}

For a nonempty composition $\alpha=(\alpha_1,\ldots,\alpha_r)$, define its
reduced cut coproduct by omitting the endpoint components:
\begin{equation}
\begin{split}
 \overline\Delta_{\alpha,I}:C_\bullet^{t,q}(\alpha)[I]
 &\longrightarrow
 \bigoplus_{p=1}^{r-1}
 \ \bigoplus_{\substack{I=S\sqcup T\\|S|=c_p}}
 C_\bullet^{t,q}(\alpha_{\leq p})[S]\otimes
 C_\bullet^{t,q}(\alpha_{>p})[T],\\
 \overline\Delta_{\alpha,I}
 &:=\bigoplus_{p=1}^{r-1}
   \ \bigoplus_{\substack{I=S\sqcup T\\|S|=c_p}}
   \Delta^p_{S,T}.
\end{split}
 \label{eq:reduced-cut-coproduct}
\end{equation}
When $r=1$, the target is zero and so is the map.

Assume from now on that $\alpha$ has at most one odd part.  For
$\theta\in\mathcal D_n$, let
$C_\bullet^{t,q}(\alpha;\theta)[I]$ denote the subcomplex spanned by basis
elements with total decoration $\theta$.

\begin{theorem}[Coproduct compatibility of total decorations]
\label{thm:total-decoration-cut-coproduct}
Let $\alpha\models n$ have at most one odd part, set $r=\ell(\alpha)$, let
$1\leq p<r$, and put $c=c_p$.  For every $I=S\sqcup T$ with $|S|=c$ and every
$\theta\in\mathcal D_n$, the following hold.
\begin{enumerate}[label=\textup{(\roman*)},leftmargin=*]
\item If $c\notin F_\alpha(\theta)$, then
\[
 \Delta^p_{S,T}\bigl(C_\bullet^{t,q}(\alpha;\theta)[I]\bigr)=0.
\]
\item If $c\in F_\alpha(\theta)$, write the unique factorization as
$\theta=\theta_L\diamond\theta_R$, with
$\theta_L\in\mathcal D_c$ and $\theta_R\in\mathcal D_{n-c}$.  Then
\begin{equation}
 \Delta^p_{S,T}\bigl(C_\bullet^{t,q}(\alpha;\theta)[I]\bigr)
 \subseteq
 C_\bullet^{t,q}(\alpha_{\leq p};\theta_L)[S]\otimes
 C_\bullet^{t,q}(\alpha_{>p};\theta_R)[T].
 \label{eq:total-decoration-coproduct-inclusion}
\end{equation}
Moreover,
\begin{align}
 F_\alpha(\theta)
 &=F_{\alpha_{\leq p}}(\theta_L)
   \sqcup\{c\}
   \sqcup\bigl(c+F_{\alpha_{>p}}(\theta_R)\bigr),
 \label{eq:factorization-cut-locality}\\
 \gamma_\alpha(\theta)
 &=\gamma_{\alpha_{\leq p}}(\theta_L)
   \mathbin{\cdot}
   \gamma_{\alpha_{>p}}(\theta_R).
 \label{eq:ribbon-concatenation-at-cut}
\end{align}
Here $c+U=\{c+u:u\in U\}$ and $\cdot$ denotes concatenation of compositions.
\end{enumerate}
For each $c_p\in F_\alpha(\theta)$, denote the corresponding factors by
$\theta_L^{(p)}$ and $\theta_R^{(p)}$.  Then
\begin{equation}
 \overline\Delta_{\alpha,I}
 \bigl(C_\bullet^{t,q}(\alpha;\theta)[I]\bigr)
 \subseteq
 \bigoplus_{\substack{1\leq p<\ell(\alpha)\\c_p\in F_\alpha(\theta)}}
 \ \bigoplus_{\substack{I=S\sqcup T\\|S|=c_p}}
 C_\bullet^{t,q}(\alpha_{\leq p};\theta_L^{(p)})[S]
 \otimes
 C_\bullet^{t,q}(\alpha_{>p};\theta_R^{(p)})[T].
 \label{eq:total-decoration-coproduct-decomposition}
\end{equation}
Thus the reduced coproduct respects the total-decoration decomposition, with
the decoration index split by unique factorization.
\end{theorem}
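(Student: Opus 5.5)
The argument works on basis elements, since each $\Delta^p_{S,T}$ sends a basis element either to zero or to a single tensor of basis elements. Take a basis element $w=(B_1,\ldots,B_k;\xi_1,\ldots,\xi_k)$ of $C_\bullet^{t,q}(\alpha;\theta)[I]$, of type $\beta\succeq\alpha$, so that $\theta=\xi_1\diamond\cdots\diamond\xi_k$.

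For (i), suppose $\Delta^p_{S,T}(w)\neq0$. Then $S=B_1\sqcup\cdots\sqcup B_j$ for some $j$. Since $|S|=c_p$, the integer $c$ is a partial sum of $\beta$, so $c\in D(\beta)$. The case $j=0$ or $j=k$ cannot occur, because $0<c<n$. Put $\theta'_L=\xi_1\diamond\cdots\diamond\xi_j$ and $\theta'_R=\xi_{j+1}\diamond\cdots\diamond\xi_k$. Associativity gives $\theta=\theta'_L\diamond\theta'_R$, so $\theta$ lies in the image of $\mu_{(c,n-c)}$. Lemma~\ref{lem:decoration-cut-factorization}, applied to the two-part coarsening $(c,n-c)\succeq\alpha$, then gives $c\in F_\alpha(\theta)$. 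This proves (i) by contraposition.

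For the inclusion \eqref{eq:total-decoration-coproduct-inclusion} in (ii), the same computation applies. The injectivity in Lemma~\ref{lem:decoration-cut-factorization} for $(c,n-c)$ forces $\theta'_L=\theta_L$ and $\theta'_R=\theta_R$. The two tensor factors of $\Delta^p_{S,T}(w)$ are basis elements of types $(\beta_1,\ldots,\beta_j)\succeq\alpha_{\le p}$ and $(\beta_{j+1},\ldots,\beta_k)\succeq\alpha_{>p}$, and their total decorations are $\theta_L$ and $\theta_R$.

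For the locality formula \eqref{eq:factorization-cut-locality}, I use the defining condition \eqref{eq:abstract-cut-factorization-condition} for $\alpha$, $\alpha_{\le p}$ and $\alpha_{>p}$, all of which have at most one odd part. Fix $d\in D(\alpha)$ with $d<c$, and consider the coarsening with cut set $\{d,c\}$. The lemma says $\{d,c\}\subseteq F_\alpha(\theta)$ if and only if $\theta=x\diamond y\diamond z$ with $x\in\mathcal D_d$, $y\in\mathcal D_{c-d}$ and $z\in\mathcal D_{n-c}$. Regroup this as $(x\diamond y)\diamond z$. Uniqueness of the factorization at $c$ identifies $x\diamond y=\theta_L$ and $z=\theta_R$. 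So, given $c\in F_\alpha(\theta)$, we get $d\in F_\alpha(\theta)$ if and only if $\theta_L\in\operatorname{im}\mu_{(d,c-d)}$, which holds if and only if $d\in F_{\alpha_{\le p}}(\theta_L)$.

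Two points need checking here. First, $d\in F_\alpha(\theta)$ alone already implies $\{d,c\}\subseteq F_\alpha(\theta)$, because $F_\alpha(\theta)$ is just the set of cuts at which $\theta$ factors and we are assuming $c\in F_\alpha(\theta)$. Second, a factorization of $\theta_L$ at $d$, multiplied on the right by $\theta_R$, gives a factorization of $\theta$ with cut set $\{d,c\}$. The case $d>c$ is symmetric, using $x\diamond(y\diamond z)$ and the shift by $c$. This gives \eqref{eq:factorization-cut-locality}, and \eqref{eq:ribbon-concatenation-at-cut} follows at once, since concatenating compositions corresponds to the shifted union of cut sets with $c$ inserted.

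Finally, \eqref{eq:total-decoration-coproduct-decomposition} follows by summing over $p$ and over $(S,T)$. Part (i) kills the components with $c_p\notin F_\alpha(\theta)$, and part (ii) places the remaining ones in the stated summands. The step that needs the most care is the locality formula \eqref{eq:factorization-cut-locality}: the argument there must use the uniqueness of the two-cut factorization, and not only its existence, in order to transfer factorization cuts between $\theta$ and $\theta_L,\theta_R$.
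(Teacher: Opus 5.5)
Your proposal is correct and follows essentially the same route as the paper. Both arguments reduce everything to \cref{lem:decoration-cut-factorization}: injectivity of $\mu_{(c,n-c)}$ identifies the split decorations with $\theta_L,\theta_R$, and associativity plus uniqueness of the factorization at $c$ transfers cuts $d<c$ (resp.\ $d>c$) to $\theta_L$ (resp.\ $\theta_R$). The only cosmetic difference is in (i): the paper uses $D(\beta)\subseteq F_\alpha(\theta)$ for the block type $\beta$ directly, whereas you pass through the two-part coarsening $(c,n-c)$, which is the same lemma applied to a different coarsening.
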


\begin{proof}
A basis element in the $\theta$-fiber has block type $\beta$ satisfying
$D(\beta)\subseteq F_\alpha(\theta)$ by
\cref{lem:decoration-cut-factorization}.  If its image under
$\Delta^p_{S,T}$ is nonzero, then $c\in D(\beta)$, which proves~\textup{(i)}.

Suppose $c\in F_\alpha(\theta)$.  If a bar splits at $c$, the products of its
left and right local decorations factor $\theta$ at $c$.  Uniqueness in
\cref{lem:decoration-cut-factorization} identifies them with $\theta_L$ and
$\theta_R$, proving \eqref{eq:total-decoration-coproduct-inclusion}.

A cut $d<c$ factors $\theta$ if and only if simultaneous factorization at
$d$ and $c$ exists.  By associativity and uniqueness, this is equivalent to
factorization of $\theta_L$ at $d$.  Similarly, a cut $d>c$ factors $\theta$
if and only if $\theta_R$ factors at $d-c$.  This proves
\eqref{eq:factorization-cut-locality}; taking cut sets gives
\eqref{eq:ribbon-concatenation-at-cut}.  Summing the component inclusions over
all internal cuts gives
\eqref{eq:total-decoration-coproduct-decomposition}.
\end{proof}

At the level of homology, the aggregate label-set component at a
factorization cut is injective.  Its cokernel is determined by the ribbon
product rule.

For nonempty compositions $\lambda=(\lambda_1,\ldots,\lambda_a)$ and
$\mu=(\mu_1,\ldots,\mu_b)$, write
\[
 \lambda\odot\mu
 = (\lambda_1,\ldots,\lambda_{a-1},
    \lambda_a+\mu_1,\mu_2,\ldots,\mu_b)
\]
for their near-concatenation.

\begin{corollary}[Cut-induced ribbon injection]
\label{cor:ribbon-cut-exact-sequence}
Retain the hypotheses and notation of
\cref{thm:total-decoration-cut-coproduct}, assume $c\in F_\alpha(\theta)$,
and set
\[
 \lambda=\gamma_{\alpha_{\leq p}}(\theta_L),\qquad
 \mu=\gamma_{\alpha_{>p}}(\theta_R).
\]
Let
\[
 \Delta_c=\bigoplus_{\substack{S\subseteq I\\|S|=c}}
 \Delta^p_{S,I\setminus S}.
\]
Under the K\"unneth identification, the induced homology map is an injective
$\mathfrak S_I$-map
\begin{equation}
 H(\Delta_c):
 H_{\ell(\lambda)+\ell(\mu)}
   (C_\bullet^{t,q}(\alpha;\theta)[I])
 \lhook\joinrel\longrightarrow
 \bigoplus_{\substack{S\subseteq I\\|S|=c}}
 H_{\ell(\lambda)}
   (C_\bullet^{t,q}(\alpha_{\leq p};\theta_L)[S])
 \otimes
 H_{\ell(\mu)}
   (C_\bullet^{t,q}(\alpha_{>p};\theta_R)[I\setminus S]).
 \label{eq:ribbon-cut-homology-injection}
\end{equation}
All terms lie in bidegree
$\operatorname{bideg}(\theta)=
 \operatorname{bideg}(\theta_L)+\operatorname{bideg}(\theta_R)$.
If $Q_{\theta,c}$ denotes the cokernel, then
\begin{equation}
 \operatorname{ch}_{t,q}Q_{\theta,c}
 =t^{\deg_t\theta}q^{\deg_q\theta}r_{\lambda\odot\mu}.
 \label{eq:ribbon-cut-cokernel-character}
\end{equation}
Thus $Q_{\theta,c}$ is noncanonically isomorphic to the classical ribbon
representation indexed by $\lambda\odot\mu$, placed in the bidegree of
$\theta$.
\end{corollary}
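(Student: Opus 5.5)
The plan is to reduce everything to classical ribbon complexes via the fiber isomorphism, and then read off injectivity from the top chain degree, where homology is just the kernel of the boundary.

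\emph{Reduction to classical complexes.} By \cref{thm:total-decoration-ribbon-homology} (applied with label set $I$, and likewise for $S$ and $I\setminus S$), the $\theta$-fiber $C_\bullet^{t,q}(\alpha;\theta)[I]$ is isomorphic to the classical complex $C_\bullet(\gamma)[I]$, where $\gamma=\gamma_\alpha(\theta)$. By \eqref{eq:ribbon-concatenation-at-cut}, $\gamma=\lambda\cdot\mu$. Similarly, the factor fibers are $C_\bullet(\lambda)[S]$ and $C_\bullet(\mu)[I\setminus S]$. These isomorphisms forget the decorations, which are uniquely determined by \cref{lem:decoration-cut-factorization}. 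Under them, $\Delta^p_{S,T}$ becomes the plain deconcatenation of ordered set partitions at the initial union $S$. This holds because, by \cref{thm:total-decoration-cut-coproduct}, the left and right local decorations multiply to $\theta_L$ and $\theta_R$. Bidegrees add because $\operatorname{bideg}$ is additive under $\diamond$, and all three fibers sit in the fixed bidegrees of $\theta$, $\theta_L$, $\theta_R$.

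\emph{Top-degree injectivity.} Set $N=\ell(\gamma)=\ell(\lambda)+\ell(\mu)$. By \cref{prop:classical-rank-selected-homology}, all homology is concentrated in degree $N$. Since $N$ is the top chain degree, $H_N=\ker(\partial_N\colon M^\gamma[I]\to C_{N-1})$, with no boundaries to quotient by.

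In the target, the Künneth formula over a field shows that
\[
\bigoplus_S C_\bullet(\lambda)[S]\otimes C_\bullet(\mu)[I\setminus S]
\]
also has homology only in degree $N$. There it equals $\bigoplus_S \ker\partial_{\ell(\lambda)}\otimes\ker\partial_{\ell(\mu)}$, which is the kernel of the tensor differential on top chains.

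On top chains, every basis element has block type $\gamma$. Since $c\in D(\gamma)$, the set $S:=B_1\sqcup\cdots\sqcup B_{\ell(\lambda)}$ has size $c$. Hence $\Delta_c$ maps $M^\gamma[I]$ bijectively onto $\bigoplus_{|S|=c}M^\lambda[S]\otimes M^\mu[I\setminus S]$, the inverse being concatenation. By \cref{prop:cut-coproduct}, $\Delta_c$ is a chain map, so it sends top cycles to top cycles. Being injective on top chains, it is injective on $H_N$.

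Equivariance holds because $\mathfrak S_I$ permutes the summands indexed by $S$ compatibly with relabelling, and each $\Delta^p_{S,T}$ is natural.

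\emph{Cokernel character.} Since characteristic zero makes sequences of $\mathfrak S_I$-modules split, it suffices to compare Frobenius characteristics. The target is $\operatorname{Ind}_{\mathfrak S_c\times\mathfrak S_{n-c}}^{\mathfrak S_n}$ of $H(\lambda)\otimes H(\mu)$, whose characteristic is $r_\lambda r_\mu$. The classical ribbon multiplication rule gives
\[
r_\lambda r_\mu=r_{\lambda\cdot\mu}+r_{\lambda\odot\mu}.
\]
Subtracting the source character $r_{\lambda\cdot\mu}=r_\gamma$ and attaching the common weight $t^{\deg_t\theta}q^{\deg_q\theta}$ yields \eqref{eq:ribbon-cut-cokernel-character}.

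The main point needing care is the first step. One must check that the fiber isomorphisms for $\alpha$, $\alpha_{\le p}$ and $\alpha_{>p}$ intertwine $\Delta_c$ with plain deconcatenation, including signs. The sign check reduces to the computation already done in the proof of \cref{prop:cut-coproduct}. One must also confirm that the top degree $N$ of the $\theta$-fiber equals $\ell(\lambda)+\ell(\mu)$, which follows from \eqref{eq:factorization-cut-locality}.
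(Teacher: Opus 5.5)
Your proposal is correct and follows essentially the same route as the paper. Both arguments use the top-chain bijection given by the unique prefix union of size $c$ together with unique factorization of the decorations, deduce injectivity on homology because the target tensor complex has nothing above total degree $\ell(\lambda)+\ell(\mu)$, and obtain the cokernel character from the ribbon product rule $r_\lambda r_\mu=r_{\lambda\cdot\mu}+r_{\lambda\odot\mu}$; your identification of the target's top homology as $\ker\partial\otimes\ker\partial$ only spells out the K\"unneth step that the paper invokes directly.
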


\begin{proof}
The complexes are finite-dimensional over a field, so the K\"unneth theorem
identifies the homology of each tensor product with the tensor product of the
factor homologies.  Both factors have homology concentrated in their top
degrees by \cref{prop:classical-rank-selected-homology}.

By \eqref{eq:factorization-cut-locality}, the source fiber is the classical
ribbon complex for $\lambda\mathbin{\cdot}\mu$.  Its top chain group has block
type $\lambda\mathbin{\cdot}\mu$.  Every top basis element has a unique prefix
union of size $c$, and cutting there gives a pair of top basis elements of
types $\lambda$ and $\mu$.  Conversely, concatenating such a pair recovers the
source basis element, and unique factorization recovers the local decorations.
Therefore $\Delta_c$ is a bijection on top chain groups.

The target chain complex has no group above total degree
$\ell(\lambda)+\ell(\mu)$.  Hence a top cycle whose image is zero in homology
already has zero image as a chain.  The top-chain bijection proves
injectivity.  The target is the induced product of the two factor modules, so
its Frobenius characteristic is $r_\lambda r_\mu$, while the source has
character $r_{\lambda\cdot\mu}$.  The ribbon product rule
\[
 r_\lambda r_\mu
 =r_{\lambda\cdot\mu}+r_{\lambda\odot\mu}
\]
gives \eqref{eq:ribbon-cut-cokernel-character}.  In characteristic zero, the
Frobenius characteristic determines the isomorphism class of a finite
$\mathfrak S_I$-module.
\end{proof}

\begin{corollary}[Cut-primitive homology]
\label{cor:cut-primitive-homology}
Let $\alpha\models n$ have at most one odd part.  Define
\begin{equation}
 \operatorname{Prim}_{\mathrm{cut}}H(C_\bullet^{t,q}(\alpha))
 :=\ker H(\overline\Delta_{\alpha,[n]}).
 \label{eq:cut-primitive-definition}
\end{equation}
If $\ell(\alpha)=1$, the reduced coproduct is zero by convention.  Then
\begin{equation}
 \operatorname{Prim}_{\mathrm{cut}}H(C_\bullet^{t,q}(\alpha))
 =H_1(C_\bullet^{t,q}(\alpha)),
 \label{eq:cut-primitives-equal-h-one}
\end{equation}
and
\begin{equation}
 \operatorname{ch}_{t,q}\operatorname{Prim}_{\mathrm{cut}}
 H(C_\bullet^{t,q}(\alpha))
 =G_{\alpha,\varnothing}(t,q)r_{(n)}.
 \label{eq:cut-primitive-character}
\end{equation}
Equivalently, a total-decoration ribbon summand is cut-primitive exactly when
its factorization-cut set is empty.
\end{corollary}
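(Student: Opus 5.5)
The plan is to work fiber by fiber, using \cref{thm:total-decoration-ribbon-homology} and \cref{thm:total-decoration-cut-coproduct}. Both the chain complex and the reduced coproduct split over total decorations $\theta\in\mathcal D_n$. By \eqref{eq:total-decoration-coproduct-decomposition}, $\overline\Delta_{\alpha,[n]}$ sends the $\theta$-fiber into a direct sum of tensor products of fibers indexed by the pairs $(\theta_L^{(p)},\theta_R^{(p)})$. Different $\theta$ give different targets: uniqueness of factorization lets us recover $\theta=\theta_L^{(p)}\diamond\theta_R^{(p)}$ from the target summand. So the kernel of $H(\overline\Delta)$ is the direct sum over $\theta$ of the kernels of the fiberwise maps. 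It is therefore enough to show that, on the $\theta$-fiber, $H(\overline\Delta)$ is zero when $F_\alpha(\theta)=\varnothing$ and injective when $F_\alpha(\theta)\neq\varnothing$.

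\emph{Empty cut set.} By \cref{thm:total-decoration-cut-coproduct}(i), every component $\Delta^p_{S,T}$ vanishes on the $\theta$-fiber, so the whole fiber homology lies in the kernel. By \cref{thm:total-decoration-ribbon-homology}, the fiber is $C_\bullet((n))$, which is concentrated in degree $1$ with character $r_{(n)}$, placed in bidegree $\operatorname{bideg}(\theta)$.

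\emph{Nonempty cut set.} Choose any $c=c_p\in F_\alpha(\theta)$. The fiber homology is concentrated in the single degree $|F_\alpha(\theta)|+1=\ell(\lambda)+\ell(\mu)$, where $\lambda,\mu$ are as in \cref{cor:ribbon-cut-exact-sequence}; this equality uses \eqref{eq:factorization-cut-locality}. That corollary says the aggregate component $\Delta_c$ at this single cut is already injective on homology. Now $\Delta_c$ is the composite of $\overline\Delta_{\alpha,[n]}$ with projection onto the summands indexed by $p$, and projection onto direct summands commutes with taking homology. Hence any class killed by $H(\overline\Delta)$ is killed by $H(\Delta_c)$, so it is zero. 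The fiber therefore contributes nothing to the kernel, and it sits in degree $\geq2$.

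Combining the two cases, the kernel is the sum of the homologies of the fibers with $F_\alpha(\theta)=\varnothing$. These are exactly the fibers contributing to $H_1$ in \eqref{eq:total-decoration-homology-character}, since contributing to $H_1$ means $|F_\alpha(\theta)|=0$. This gives \eqref{eq:cut-primitives-equal-h-one}. Formula \eqref{eq:cut-primitive-character} then follows from \eqref{eq:explicit-ribbon-homology} with $S=\varnothing$, noting $\gamma_\varnothing=(n)$. If $\ell(\alpha)=1$, then $D(\alpha)=\varnothing$, so every $\theta$ has empty cut set and both sides equal the whole homology, which is $H_1$.

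The main obstacle is the first step: the kernel of the aggregate map must decompose fiberwise, i.e.\ cancellations between different fibers must be ruled out. Fiberwise splitting of the source comes from the differential preserving $\theta$. For the target, I will check that for fixed $p$ and $S$ the tensor summands $C(\alpha_{\leq p};\theta_L)[S]\otimes C(\alpha_{>p};\theta_R)[T]$ for distinct pairs $(\theta_L,\theta_R)$ are independent subcomplexes of the target. Injectivity of $\mu_\beta$ in \cref{lem:decoration-cut-factorization} then shows that distinct $\theta$ map to distinct pairs, so $H(\overline\Delta)$ is block diagonal with respect to $\theta$.
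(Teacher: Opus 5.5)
Your proposal is correct and follows essentially the same route as the paper. Fibers with empty factorization-cut set are killed by part (i) of \cref{thm:total-decoration-cut-coproduct}; fibers with a cut $c$ are detected by the injective aggregate component $H(\Delta_c)$ of \cref{cor:ribbon-cut-exact-sequence}; and unique factorization splits the target by factor pairs, so images from distinct total decorations cannot cancel. Your extra checks only make explicit what the paper leaves implicit: that $|F_\alpha(\theta)|+1=\ell(\lambda)+\ell(\mu)$ via \eqref{eq:factorization-cut-locality}, and that $H(\Delta_c)$ factors through $H(\overline\Delta_{\alpha,[n]})$ by a chain projection.
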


\begin{proof}
If $F_\alpha(\theta)=\varnothing$, part~\textup{(i)} of
\cref{thm:total-decoration-cut-coproduct} makes every internal component of
the coproduct zero on the $\theta$-fiber.  If
$F_\alpha(\theta)\neq\varnothing$, choose $c\in F_\alpha(\theta)$.  The
aggregate component at $c$ is injective on the homology of that fiber by
\cref{cor:ribbon-cut-exact-sequence}.  The target decomposes by the unique
factor pair, so images from distinct total decorations cannot cancel.  Thus
the kernel of $H(\overline\Delta_{\alpha,[n]})$ is precisely the direct sum of
the fibers with empty factorization-cut set.  By
\cref{thm:total-decoration-ribbon-homology}, these are exactly the fibers in
homological degree one.  Grouping them by the empty exact cut set and applying
\cref{cor:exact-cut-decomposition} proves
\eqref{eq:cut-primitives-equal-h-one} and
\eqref{eq:cut-primitive-character}.
\end{proof}

\begin{remark}[Scope of the comonoid structure]
\label{rem:coalgebra-not-fixed-hopf}
The coproduct sends a composition to a prefix and a suffix, so an individual
summand $C_\bullet^{t,q}(\alpha)$ is not a subcomonoid.  The comonoid is the
species $\mathcal C_\bullet$ obtained by summing over all compositions.  The
algebra $A^{\mathrm{dec}}$ organizes multiplication of local decorations, but
no compatible product on the labelled complexes is defined here; accordingly,
no Hopf-monoid structure is asserted.
\end{remark}

\subsection{Bigraded Euler characteristic}

Since $Q_m(t,q)/m!=\mathsf h_m(t,q)$, combining
\eqref{eq:Phi-k-tq} with the ribbon coarsening formula gives, for every
nonempty composition $\alpha\models n$,
\begin{equation}
 \sum_{k=1}^{\ell(\alpha)}
 (-1)^{\ell(\alpha)-k}\Phi_{\alpha,k}(t,q)
 =n!\,Z_\alpha(t,q).
 \label{eq:bigraded-euler}
\end{equation}
Thus $n!Z_\alpha(t,q)$ is the shifted bigraded Euler characteristic of the
chain groups.  When $\alpha$ has at most one odd part, Euler--Poincar\'e and
\cref{thm:total-decoration-ribbon-homology} also express it as the alternating
sum of the explicitly determined homology groups.

Specializing $q=-1$ takes the signed difference between even and odd
$q$-degrees.  For zigzag compositions, the next section identifies
this specialization with $n!\Omega(P_n;t)$ and retains the reflection-length variable.

\section{Alternating fences and bivariate positivity}
\label{sec:staircase-consequences}

We first identify the one-variable specialization with the alternating-fence
order polynomial and then prove coefficientwise positivity for its bivariate
refinement.

\subsection{The alternating-fence specialization}

For \(n\geq1\), let \(P_n\) be the alternating fence on
\(\{x_1,\ldots,x_n\}\), with cover relations
\begin{equation*}
 x_1<x_2>x_3<x_4>\cdots.
\end{equation*}
For every positive integer $m$, its order polynomial $\Omega(P_n;m)$ counts
weakly order-preserving maps $f:P_n\to[m]$ satisfying
$x\leq_P y\Rightarrow f(x)\leq f(y)$.  Put $\Omega(P_0;t)=1$.

Set
\begin{equation*}
 b_a(t)=\binom{t+a}{2a}\quad(a\geq0),\qquad
 d_a(t)=\binom{t+a}{2a+1}\quad(a\geq0).
\end{equation*}
Thus \(b_0(t)=1\) and \(d_0(t)=t\).

For later use, set
\[
 E_N(t,q)=Z_{\delta_{2N}}(t,q),\qquad
 O_N(t,q)=Z_{\delta_{2N+1}}(t,q).
\]
The coarsening formula gives
\begin{equation}
 E_N(t,q)
 =\sum_{\alpha\models N}(-1)^{N-\ell(\alpha)}
  \prod_{j=1}^{\ell(\alpha)}\mathsf h_{2\alpha_j}(t,q),
 \label{eq:even-composition-expansion}
\end{equation}
and hence
\begin{equation}
 E_N(t,q)=\sum_{a=1}^{N}(-1)^{a+1}
 \mathsf h_{2a}(t,q)E_{N-a}(t,q),\qquad E_0(t,q)=1.
 \label{eq:E-recurrence-q}
\end{equation}
Similarly, grouping a coarsening of \((2^N,1)\) by the number \(a\) of
twos merged with its final one gives
\begin{equation}
 O_N(t,q)=\sum_{a=0}^{N}(-1)^a
 \mathsf h_{2a+1}(t,q)E_{N-a}(t,q).
 \label{eq:odd-even-convolution-q}
\end{equation}

The following known identification is an instance of the Kreweras
determinant.

\begin{proposition}[Alternating-fence specialization]
\label{prop:staircase-specialization}
For every \(n\geq0\),
\begin{equation}
 \Omega(P_n;t)=Z_{\delta_n}(t,-1)=\phi_t(R_{\delta_n}).
 \label{eq:staircase-specialization}
\end{equation}
\end{proposition}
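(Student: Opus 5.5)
The plan is to prove $\Omega(P_n;t)=\phi_t(R_{\delta_n})$ by showing that both sides satisfy the same inclusion--exclusion recurrence with the same initial values. The second equality in \eqref{eq:staircase-specialization} is just \cref{def:cycle-q-character} at $q=-1$. Both sides are polynomials in $t$, so it suffices to prove equality at every positive integer $t=m$. The case $n=0$ is trivial.

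First I would evaluate the right-hand side. By \eqref{eq:even-kernel}--\eqref{eq:odd-kernel}, $\phi_t(H_{2a})=b_a(t)$ and $\phi_t(H_{2a+1})=d_a(t)$. Then \eqref{eq:E-recurrence-q} and \eqref{eq:odd-even-convolution-q} at $q=-1$ give
\[
 E_N(t,-1)=\sum_{a=1}^N(-1)^{a+1}b_a(t)E_{N-a}(t,-1),\qquad
 O_N(t,-1)=\sum_{a=0}^N(-1)^a d_a(t)E_{N-a}(t,-1).
\]
It therefore suffices to show that the fence order polynomials $\Omega(P_{2N};m)$ and $\Omega(P_{2N+1};m)$ satisfy the same two recurrences.

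Second, I would give the combinatorial interpretation of the kernels. Since $t=m$ is a positive integer, $b_a(m)=\binom{m+a}{2a}$ counts weakly order-preserving maps on the fence $P_{2a}$ that are constrained to be a \emph{chain}. Concretely, $\binom{m+a}{2a}$ counts sequences $f_1<f_2\le f_3<f_4\le\cdots$ of length $2a$ in $[m]$: the alternating strict and weak inequalities shift to weakly increasing sequences in a set of size $m+a$. Likewise $d_a(m)=\binom{m+a}{2a+1}$ counts such sequences of length $2a+1$ beginning with a weak step.

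The intended recurrence comes from the standard ribbon/descent inclusion--exclusion. $\phi_t(H_\gamma)$ counts $f:[n]\to[m]$ that on each block of $\gamma$ satisfy the ``chain'' pattern, with no condition imposed between blocks. The alternating sum over coarsenings of $\delta_n$ in \eqref{eq:ribbon-definition} should then count maps that at each cut of $\delta_n$ satisfy the \emph{complementary} relation. Reading the fence relations $x_{2i}>x_{2i+1}$ as the negation of a chain step, this is the order-polynomial condition, with the weak/strict conventions arranged by the shift $t+\lfloor m/2\rfloor$. Concretely, I would verify the Boolean inclusion--exclusion
\[
 \#\{f:\text{chain inside blocks, reversed at all cuts}\}
 =\sum_{S\subseteq D(\alpha)}(-1)^{|D(\alpha)|-|S|}\#\{f:\text{chain inside blocks of }\gamma_S\},
\]
which holds because at each cut the ``chain'' and ``reversed'' relations partition the possibilities. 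Setting $S$ equal to the set of cuts where the chain relation continues gives exactly this identity.

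The main obstacle is matching the strict/weak conventions. Inside a block, the chain pattern $f_1<f_2\le f_3<\cdots$ must, when negated at the block boundaries, produce exactly the relations $x_1\le x_2\ge x_3\le\cdots$ of $\Omega(P_n;m)$. The strictness has to flip correctly at each cut. This probably requires replacing $f$ by a transformed map, for example $g_i=f_i+\lfloor i/2\rfloor$ or a reversal $m+1-f_i$ on alternate positions, so that the negated relations become the weak fence relations. I would first test the case $n=2$, where $\Omega(P_2;m)=\binom{m+1}{2}=b_1(m)$, and the case $n=4$, where $b_1^2-b_2$ must equal $\Omega(P_4;m)$, to pin down the convention before writing the general bijection. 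If that fails, the fallback is to cite the Kreweras determinant in the Ferroni--Morales--Panova form: the Jacobi--Trudi-type determinant of $\binom{t+\cdot}{\cdot}$ entries for the zigzag skew shape agrees with the ribbon expansion of $R_{\delta_n}$ under $\phi_t$ via \eqref{eq:Z-coarsening}.
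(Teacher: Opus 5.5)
Your strategy works and differs from the paper's, but the step everything rests on is stated incorrectly, and you leave the decisive convention unresolved. The paper argues by citation. It takes Kreweras's determinant in the Ferroni--Morales--Panova form, expands the resulting Hessenberg determinants along the first row (even case) and the last column (odd case), and matches these expansions with \eqref{eq:E-recurrence-q} and \eqref{eq:odd-even-convolution-q} at $q=-1$. Your fallback is exactly this proof. Your main route evaluates at $t=m$ and runs Boolean inclusion--exclusion over the cuts directly on \eqref{eq:ribbon-definition}. It needs neither the determinant, the poset duality, nor the recurrences. It treats both parities uniformly and explains combinatorially why the kernel is $\binom{t+\lfloor k/2\rfloor}{k}$.

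The error is in your count for $b_a$. Sequences $f_1<f_2\le f_3<f_4\le\cdots$ of length $2a$ in $[m]$ number $\binom{m+a-1}{2a}$, not $\binom{m+a}{2a}$; already for $a=1$ you get $\binom m2\neq b_1(m)$. This convention also clashes with your description of $d_a$, which correctly starts with a weak step. For $(2,\ldots,2,1)$, the pattern on a merged odd block must restrict to the even-block pattern on its initial part, so the two must agree. Worse, with strict odd steps the relation created at a merged cut is weak. Negating it gives $f_1<f_2>f_3<\cdots$, so you would be counting strictly order-preserving maps.

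The correct convention, for blocks of either parity, is weak at odd steps and strict at even steps:
\[
 f_1\le f_2<f_3\le f_4<\cdots .
\]
Subtracting $\lfloor (j-1)/2\rfloor$ from $f_j$ turns these into weakly increasing sequences in $[m-a+1]$ (length $2a$) or in $[m-a]$ (length $2a+1$). This gives exactly $b_a(m)$ and $d_a(m)$.

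Every cut $c$ of $\delta_n$ is even, so merging across $c$ inserts exactly the strict step $f_c<f_{c+1}$ between the two block patterns. You did not verify this compatibility, and it is what makes $\phi_m(H_{\gamma_S})$ count maps obeying the $\delta_n$ block patterns together with $f_c<f_{c+1}$ for all $c\in D(\delta_n)\setminus S$. The sign $(-1)^{|D(\delta_n)|-|S|}$ then isolates the maps with $f_c\ge f_{c+1}$ at every cut. These are the maps $f_1\le f_2\ge f_3\le\cdots$, counted by $\Omega(P_n;m)$. No auxiliary substitution such as $g_i=f_i+\lfloor i/2\rfloor$ is needed.
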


\begin{proof}
The case $n=0$ is immediate.  Kreweras's determinant, in the form
\cite[Proposition~2.2]{FerroniMoralesPanova}, applies to the two zigzag
ribbons in \cite[Equation~(5.1)]{FerroniMoralesPanova}.  Their cell posets are
dual to $P_{2N}$ and $P_{2N+1}$, respectively, and duality preserves the weak
order polynomial.  Substitution in the determinant gives, with
$\Delta_0(t)=1$ and $b_a=0$ for $a<0$,
\begin{equation}
 \Omega(P_{2N};t)=\Delta_N(t),
 \qquad
 \Delta_N(t)=\det[b_{j-i+1}(t)]_{i,j=1}^{N}.
 \label{eq:fence-even-determinant}
\end{equation}
For the odd ribbon, the same substitution gives the Hessenberg determinant
\begin{equation*}
 \Omega(P_{2N+1};t)=
 \det\!\begin{pmatrix}
 b_1&b_2&\cdots&b_N&d_N\\
 1&b_1&\cdots&b_{N-1}&d_{N-1}\\
 0&1&\cdots&b_{N-2}&d_{N-2}\\
 \vdots&\ddots&\ddots&\vdots&\vdots\\
 0&\cdots&0&1&d_0
 \end{pmatrix}.
\end{equation*}
Expansion along the last column gives
\begin{equation}
 \Omega(P_{2N+1};t)
 =\sum_{a=0}^{N}(-1)^a d_a(t)\Delta_{N-a}(t).
 \label{eq:fence-odd-determinant}
\end{equation}
Expanding \eqref{eq:fence-even-determinant} along the first row yields
\[
 \Delta_N(t)=\sum_{a=1}^{N}(-1)^{a+1}b_a(t)\Delta_{N-a}(t).
\]
By \eqref{eq:even-kernel}, this is exactly
\eqref{eq:E-recurrence-q} at $q=-1$, so
$\Delta_N(t)=E_N(t,-1)$.  Substituting this equality and
\eqref{eq:odd-kernel} into \eqref{eq:fence-odd-determinant} gives
\eqref{eq:odd-even-convolution-q} at $q=-1$.  Hence
$\Omega(P_n;t)=Z_{\delta_n}(t,-1)$ in both parities.
\end{proof}
\subsection{Generating functions}

Introduce
\begin{equation*}
 \mathcal B_{t,q}(x)
 =\sum_{a\geq0}\mathsf h_{2a}(t,q)x^a,
 \qquad
 \mathcal O_{t,q}(x)
 =\sum_{a\geq0}\mathsf h_{2a+1}(t,q)x^a.
\end{equation*}
We use the rising Pochhammer symbol
\[
 (a)_n=a(a+1)\cdots(a+n-1),
 \qquad (a)_0=1,
\]
and
\[
 {}_2F_1(A,B;C;x)
 =\sum_{n\geq0}
  \frac{(A)_n(B)_n}
       {(C)_n n!}x^n.
\]

\begin{lemma}[Generating functions for the zigzag compositions]
\label{lem:staircase-generating-series}
In \(\mathcal R[[x]]\),
\begin{align*}
 \sum_{N\geq0}E_N(t,q)x^N
 &=\frac{1}{\mathcal B_{t,q}(-x)},
 \\
 \sum_{N\geq0}O_N(t,q)x^N
 &=\frac{\mathcal O_{t,q}(-x)}{\mathcal B_{t,q}(-x)}.
\end{align*}
Moreover, in \(\mathbb Q[t,q,q^{-1}][[x]]\),
\begin{equation}
 \mathcal B_{t,q}(-x)
 ={}_2F_1\left(\frac tq,t+1;\frac12;-\frac{qx}{4}\right).
 \label{eq:B-hypergeom}
\end{equation}
Every coefficient of these identities belongs to $\mathbb Q[t,q]$.
Hence they have a unique coefficientwise
specialization at \(q=0\), even though the displayed hypergeometric
parameters contain \(q^{-1}\).
\end{lemma}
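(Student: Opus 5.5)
The plan is to derive the two series identities from the recurrences already established, and then to obtain the hypergeometric form by a direct coefficient comparison.

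\emph{Series identities.} Recurrence \eqref{eq:E-recurrence-q} reads $\sum_{a=0}^{N}(-1)^a\mathsf h_{2a}(t,q)E_{N-a}(t,q)=0$ for $N\geq1$. For $N=0$ the same sum equals $1$, since $\mathsf h_0=1$ and $E_0=1$. In generating-function form this says $\mathcal B_{t,q}(-x)\cdot\sum_N E_Nx^N=1$. The constant term of $\mathcal B_{t,q}(-x)$ is $1$, so it is invertible in $\mathcal R[[x]]$, which gives the first identity. The convolution \eqref{eq:odd-even-convolution-q} is exactly the coefficient of $x^N$ in $\mathcal O_{t,q}(-x)\sum_N E_Nx^N$, and substituting the first identity gives the second. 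In both identities every coefficient lies in $\mathcal R=\mathbb Q[t,q]$, because only the unit constant term is inverted. This gives the final integrality assertion and the well-defined specialization at $q=0$.

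\emph{Hypergeometric form.} Take $m=2a$, so $m^+=m^-=a$, and
\[
(2a)!\,\mathsf h_{2a}=\prod_{i=1}^{a}(t+i)\prod_{j=0}^{a-1}(t+jq).
\]
The first factor is $(t+1)_a$. The second factor is $q^a\prod_{j=0}^{a-1}(t/q+j)=q^a(t/q)_a$ in $\mathbb Q[t,q,q^{-1}]$. On the other side, use $(2a)!=4^a\,a!\,(1/2)_a$, which follows from $(2a)!=2^a a!\cdot(1\cdot3\cdots(2a-1))$ and $1\cdot3\cdots(2a-1)=2^a(1/2)_a$. Then
\[
\mathsf h_{2a}(t,q)(-x)^a=\frac{(t/q)_a(t+1)_a}{(1/2)_a\,a!}\left(-\frac{qx}{4}\right)^a.
\]
Summing over $a$ gives \eqref{eq:B-hypergeom}. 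Each coefficient is a genuine polynomial in $t,q$, since $q^a(t/q)_a=\prod_{j<a}(t+jq)$. So the identity holds in $\mathbb Q[t,q,q^{-1}][[x]]$, with all coefficients already lying in $\mathbb Q[t,q]$.

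No step is a real obstacle. The only point needing care is the bookkeeping of the $q^{-1}$ in the parameter $t/q$. This is handled by the observation that $q^a(t/q)_a$ is polynomial, which is what justifies the remark about the specialization at $q=0$.
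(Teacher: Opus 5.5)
Your proposal is correct and follows essentially the same route as the paper. You turn the two recurrences into the series identities by inverting the kernel $\mathcal B_{t,q}(-x)$, whose constant term is $1$. You obtain the hypergeometric form from $(2a)!=4^a a!(1/2)_a$ together with $q^a(t/q)_a=\prod_{j<a}(t+jq)$, and the integrality and $q=0$ specialization follow as in the paper.
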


\begin{proof}
Multiplying \eqref{eq:E-recurrence-q} by \(x^N\) and summing gives
\(\sum E_Nx^N=\mathcal B_{t,q}(-x)^{-1}\).
Equation~\eqref{eq:odd-even-convolution-q} then gives
\(\sum O_Nx^N=\mathcal O_{t,q}(-x)\sum E_Nx^N\).
The Pochhammer and duplication identities give, coefficientwise,
\begin{equation*}
 \mathsf h_{2a}(t,q)
 =\frac{(t/q)_a(t+1)_a}
        {(1/2)_a a!}\left(\frac q4\right)^a.
\end{equation*}
Summation proves \eqref{eq:B-hypergeom}.
Finally, \eqref{eq:hm-tq} places both kernel series in
\(\mathbb Q[t,q][[x]]\); the even kernel has constant term \(1\).
Its inverse lies there as well, which proves the assertion at \(q=0\).
\end{proof}

\subsection{Coefficientwise positivity}

Ribbon inclusion--exclusion introduces alternating signs.  After the
substitution $q=-u$, multiplying the zigzag evaluation by $n!$ gives a
polynomial with nonnegative coefficients.

\begin{theorem}[Bivariate coefficientwise positivity]
\label{thm:cycle-sign-coherence}
For every \(n\geq0\),
\begin{equation}
 n!\,Z_{\delta_n}(t,-u)\in\mathbb N[t,u].
 \label{eq:cycle-sign-coherence}
\end{equation}
Equivalently, whenever
\([t^iq^j]\,n!Z_{\delta_n}(t,q)\neq0\), its sign is \((-1)^j\).
\end{theorem}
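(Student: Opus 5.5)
The plan is to work with the generating functions of \cref{lem:staircase-generating-series} at $q=-u$, and to rescale so that the factor $n!$ is absorbed. Put
\[
 e_N:=(2N)!\,E_N(t,-u),\qquad o_N:=(2N+1)!\,O_N(t,-u).
\]
Passing to exponential-type generating functions in the variable $y$ with $x=y^2$, the even series is $1/\mathcal B_{t,-u}(-y^2)$ and the odd series is $\mathcal O_{t,-u}(-y^2)/\mathcal B_{t,-u}(-y^2)$.

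By \eqref{eq:B-hypergeom}, the even kernel $B(x)=\mathcal B_{t,q}(-x)$ satisfies the Gauss hypergeometric equation with parameters $A=t/q$, $B=t+1$, $C=1/2$ in the variable $z=-qx/4$. We will rewrite this second-order linear ODE for $B$ as a Riccati equation for the logarithmic derivative $w=B'/B$. Since the even series is $E=1/B$, we get $E'/E=-w$. We will then convert the Riccati equation into polynomial recurrences in $N$ for $e_N$, and probably also for an auxiliary sequence such as the coefficients of $E\cdot w$, or of $E'$ paired with an odd-type companion. The aim is to reach two coupled recurrences in which every coefficient, once $q=-u$ is substituted, is a polynomial in $t,u,N$ with nonnegative coefficients. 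The rescaling by $(2N)!$ is what should make the denominators, namely the $(1/2)_a$ and $a!$ in $\mathsf h_{2a}$, disappear.

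Concretely, the hypergeometric equation $z(1-z)F''+[C-(A+B+1)z]F'-ABF=0$ with $z=ux/4$ (here $A=-t/u$) has $AB=-t(t+1)/u$. After scaling $x\mapsto$ the derivative variable in $y$, this gives an identity of the form
\[
 \bigl(1-\tfrac{u}{4}y^2\bigr)\text{-type factors}\cdot(\text{second derivative})
 =(\text{positive combination of }t,u)\cdot(\text{lower terms}).
\]
I expect that $1-z$ becomes $1+(\text{positive})$ after the sign change $x\to -x$ inherent in $\mathcal B(-x)$, so that the signs work out. The key steps, in order, are:
\begin{enumerate}
\item derive the Riccati equation for $v=-E'/E$ (equivalently for $E$ itself, from $E=1/B$);
\item introduce an odd companion $g$, for example $g\propto y\,E'/E$ or a multiple of $E'$, so that the system becomes first order in the pair $(E,g)$;
\item extract coefficient recurrences expressing $e_{N+1}$ and $g_{N+1}$ as sums of products of earlier terms with coefficients in $\mathbb N[t,u,N]$;
\item conclude $e_N\in\mathbb N[t,u]$ by induction.
\end{enumerate}

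For the odd case we will use the first-order relation between the kernels. Checking $\mathsf h_{2a+1}$ against $\mathsf h_{2a}$ shows that the odd kernel $\mathcal O$ is related to $\mathcal B$ by a differential relation of the form
\[
 \mathcal O=\tfrac{t}{1}\,{}_2F_1(\cdot)\ \text{with shifted parameters},
\]
which should equal a combination like $t\mathcal B+c\,x\mathcal B'$. If so, the odd series becomes
\[
 \mathcal O/\mathcal B=t+c\,x\,\mathcal B'/\mathcal B,
\]
which is expressible through the Riccati variable. Positivity of $o_N$ will then follow from positivity of the auxiliary sequence already established in the even induction. Finally, we will check the small cases $n=0,1,2$ directly, and note that the equivalent sign formulation is immediate from substituting $q=-u$.

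The main obstacle is step 3: finding the right normalization of the companion series and of the factorial scaling so that every coefficient in the recurrence is manifestly nonnegative. A naive recurrence obtained from \eqref{eq:E-recurrence-q} alternates in sign. My first attempt will be to scale by $(2N)!$ and $(2N+1)!$ and use $y$-derivatives, so that the factor $(1/2)_a$ is cancelled. If a stray negative term from $(1-z)$ survives, I would try instead a second auxiliary sequence absorbing it, such as the coefficients of $(1+\tfrac u4 x)$ times the even series.
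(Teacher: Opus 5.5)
You correctly identify the ingredients: a Riccati equation for the logarithmic derivative of the even kernel, and a first-order relation for the odd kernel. However, the step you call the main obstacle is the whole content of the proof, and your guesses about it are wrong.

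\textbf{The even case.} At $q=-u$ one has $B(x)={}_2F_1(-t/u,t+1;\tfrac12;ux/4)$. So $z=ux/4$, and $1-z=1-ux/4$ does not turn positive. The equation is
\[
 (4x-ux^2)B''+\bigl(2-(tu-t+2u)x\bigr)B'+t(t+1)B=0 .
\]
Put $A=-B'/B=\sum_{n\ge0}a_nx^n$, so $a_0=t(t+1)/2$. The coefficient recurrence is
\[
 (4n+2)a_n=4\kappa_{n-1}-u\kappa_{n-2}+\bigl(tu-t+(n+1)u\bigr)a_{n-1},
 \qquad \kappa_m=\sum_{i=0}^m a_ia_{m-i},
\]
and it has two negative contributions, $-u\kappa_{n-2}$ and $-ta_{n-1}$.

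The missing idea is the auxiliary sequence $\varepsilon_n:=4a_{n+1}-ua_n$ (the paper's $e_n$), i.e.\ the coefficients of $(4-ux)A$. Your fallback, $(1+\tfrac{u}{4}x)$ times the even series, has the wrong sign and multiplies the wrong series.

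Splitting off the last convolution term gives $4\kappa_{n-1}-u\kappa_{n-2}=\sum_{i=0}^{n-2}a_i\varepsilon_{n-2-i}+4a_0a_{n-1}$. Since $4a_0a_{n-1}=2t(t+1)a_{n-1}$, this absorbs $-ta_{n-1}$:
\[
 (4n+2)a_n=\sum_{i=0}^{n-2}a_i\varepsilon_{n-2-i}+\bigl(2t^2+t+tu+(n+1)u\bigr)a_{n-1}.
\]
Take the raw recurrence at $n+1$, multiply by $4$, and subtract $(4n+6)ua_n$. The same manipulation gives
\[
 (4n+6)\varepsilon_n=4\sum_{i=0}^{n-1}a_i\varepsilon_{n-1-i}+(8t^2+4t+4tu+2u)a_n .
\]
The two recurrences close into a simultaneous induction proving $a_n,\varepsilon_n\in\mathbb Q_{\ge0}[t,u]$. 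The even case then follows from $(1/B)'=A/B$, i.e.\ $(N+1)E_{N+1}(t,-u)=\sum_{i+j=N}a_iE_j(t,-u)$.

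\textbf{The odd case.} Your guess here also fails. From $(2m+1)\mathsf h_{2m+1}(t,-u)=(t-mu)\mathsf h_{2m}(t,-u)$ and $J(x)=\mathcal O_{t,-u}(-x)$ one gets $2xJ'+J=tB-uxB'$. So it is $(2x\tfrac{d}{dx}+1)J$, not $J$, that is a combination of $B$ and $xB'$. Indeed $J=tB+cxB'$ is impossible for any $c$, because $(t-mu)/(2m+1)$ is not affine in $m$. Hence $J/B$ is not $t-cxA$. Instead $F_{\mathrm{odd}}=J/B$ satisfies the linear first-order equation
\[
 2xF_{\mathrm{odd}}'+F_{\mathrm{odd}}=2xAF_{\mathrm{odd}}+t+uxA .
\]
This gives $(2N+1)O_N(t,-u)=2\sum_{i+j=N-1}a_iO_j(t,-u)+ua_{N-1}$, and positivity again follows from $a_n\ge0$.

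\textbf{Integrality.} All these recurrences divide by $4n+2$, $4n+6$, $N+1$ or $2N+1$. Rescaling by $(2N)!$ does not by itself produce integer coefficients, so the argument only yields $\mathbb Q_{\ge0}[t,u]$. To land in $\mathbb N[t,u]$ you need a separate integrality step. The paper uses the coarsening formula
\[
 n!Z_\alpha=\sum_{\beta\succeq\alpha}(-1)^{\ell(\alpha)-\ell(\beta)}\binom{n}{\beta_1,\ldots,\beta_{\ell(\beta)}}\prod_iQ_{\beta_i},
 \qquad Q_m=m!\,\mathsf h_m\in\mathbb Z[t,q].
\]
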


At \(u=1\), \cref{thm:cycle-sign-coherence} specializes to coefficientwise
nonnegativity of \(n!\Omega(P_n;t)\), also implied by Kahane's
permutation-statistic theorem for arbitrary fence order polynomials
\cite{KahaneFenceCoefficients}.  The theorem above also records the
$q$-degree $m^+-c(\sigma)$, which is lost when $u=1$.  Its proof uses the
following recurrence for the even generating function.

\begin{lemma}[Positive Riccati recurrences]
\label{lem:bivariate-log-derivative-positivity}
Put
\[
 B(x)=\mathcal B_{t,-u}(-x),\qquad
 A(x)=-\frac{B'(x)}{B(x)}=\sum_{n\geq0}a_nx^n,
 \qquad
 e_n=4a_{n+1}-ua_n.
\]
Then
\[
 a_n,e_n\in\mathbb Q_{\geq0}[t,u]\qquad(n\geq0).
\]
\end{lemma}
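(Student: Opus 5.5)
The plan is to turn the hypergeometric identity \eqref{eq:B-hypergeom} into a differential equation for $B$, then into a Riccati equation for $A$, and finally into a coefficient recurrence. In that recurrence every negative term will be absorbed, either by the explicit constant $a_0$ or by the auxiliary quantities $e_n$. Nonnegativity of $a_n$ and $e_n$ then follows by a simultaneous induction on $n$.

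\emph{Step 1: the ODE for $B$.} Put $q=-u$ in \eqref{eq:B-hypergeom}. This gives $B(x)={}_2F_1(-t/u,\,t+1;\,\tfrac12;\,ux/4)$. The Gauss equation $z(1-z)F''+[c-(a+b+1)z]F'-abF=0$ holds with $a=-t/u$, $b=t+1$, $c=\tfrac12$, $z=ux/4$. After the change of variable and multiplication by $u$, it becomes
\[
 x(4-ux)B''+\bigl(2-(ut+2u-t)x\bigr)B'+t(t+1)B=0 .
\]
All coefficients here lie in $\mathbb Q[t,u]$, so the equation can also be checked directly on the coefficients $\mathsf h_{2a}(t,-u)$. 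This route avoids any use of $u^{-1}$ and is the one I would record.

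\emph{Step 2: the Riccati recurrence.} Substitute $B''/B=A^2-A'$ into the ODE to get
\[
 x(4-ux)A'+\bigl(2-(ut+2u-t)x\bigr)A=t(t+1)+x(4-ux)A^2 .
\]
Take the coefficient of $x^n$. For $n\ge1$ this yields
\[
 (4n+2)a_n=\bigl(u(n+1+t)-t\bigr)a_{n-1}
 +4\sum_{i+j=n-1}a_ia_j-u\sum_{i+j=n-2}a_ia_j .
\]
For $n=0$ it gives $2a_0=t(t+1)$. This agrees with the direct computation $a_0=-[x]B=\mathsf h_2(t,-u)=t(t+1)/2$.

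\emph{Step 3: removing the signs.} This is the main obstacle, since two terms carry minus signs: $-t\,a_{n-1}$ and the $u$-weighted convolution. In the first convolution, split off the $i=0$ term $4a_0a_{n-1}$. In the remaining terms, replace $4a_i$ for $i\ge1$ by $e_{i-1}+ua_{i-1}$. The resulting $u$-part cancels the negative convolution exactly, leaving
\[
 (4n+2)a_n=\bigl(u(n+1+t)+2t^2+t\bigr)a_{n-1}+\sum_{i+j=n-2}e_ia_j .
\]
Here $4a_0a_{n-1}=2t(t+1)a_{n-1}$ has absorbed $-t\,a_{n-1}$. Substituting into $e_{n-1}=4a_n-ua_{n-1}$ then gives
\[
 e_{n-1}=\frac{(8t^2+4t+4ut+2u)\,a_{n-1}+4\sum_{i+j=n-2}e_ia_j}{4n+2}.
\]
The $u$-coefficient works out because $4u(n+1+t)-(4n+2)u=4ut+2u$.

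\emph{Step 4: induction.} Assume $a_0,\dots,a_{n-1}$ and $e_0,\dots,e_{n-2}$ lie in $\mathbb Q_{\ge0}[t,u]$; the base case is $a_0=t(t+1)/2$. The two displays then express $a_n$ and $e_{n-1}$ as combinations of these with coefficients in $\mathbb Q_{\ge0}[t,u]$, so $a_n$ and $e_{n-1}$ lie in $\mathbb Q_{\ge0}[t,u]$ as well. This proves $a_n,e_n\in\mathbb Q_{\ge0}[t,u]$ for all $n$.

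The only delicate point is the algebra of Step 3: one must check that the absorption into $e$'s and $a_0$ leaves every coefficient nonnegative.
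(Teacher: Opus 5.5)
Your proposal is correct and follows the paper's proof essentially step for step. It uses the same Gauss equation after $z=ux/4$ and the same Riccati coefficient recurrence. It absorbs $-u\kappa_{n-2}$ via $4a_i=e_{i-1}+ua_{i-1}$ and $-t\,a_{n-1}$ via $4a_0a_{n-1}=2t(t+1)a_{n-1}$, exactly as the paper does, and closes with the same simultaneous induction. The only difference is cosmetic: you substitute the positive $a_n$-recurrence into $e_{n-1}=4a_n-ua_{n-1}$, whereas the paper reapplies the raw recurrence at index $n+1$, and both give the identical formula for $(4n+2)e_{n-1}$.
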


\begin{proof}
By \cref{lem:staircase-generating-series},
\[
 B(x)={}_2F_1\left(-\frac tu,t+1;\frac12;\frac{ux}{4}\right).
\]
Gauss's differential equation
\cite[Eq.~(15.10.1)]{NISTHandbook2010}, after the substitution \(z=ux/4\),
becomes
\begin{equation}
 (4x-ux^2)B''
 +\bigl(2-(tu-t+2u)x\bigr)B'
 +t(t+1)B=0.
 \label{eq:even-kernel-ode}
\end{equation}
Although the displayed hypergeometric parameters contain \(u^{-1}\),
\eqref{eq:even-kernel-ode} is an identity in
\(\mathbb Q[t,u][[x]]\): it also follows directly from the ratio of
consecutive coefficients of \(B\).  In particular, it remains valid at
\(u=0\).

Substitution of \(B'=-AB\) into \eqref{eq:even-kernel-ode} gives
\begin{equation}
 (4x-ux^2)A'
 =(4x-ux^2)A^2
 -\bigl(2-(tu-t+2u)x\bigr)A+t(t+1).
 \label{eq:even-kernel-riccati}
\end{equation}
Set
\[
 \kappa_m=\sum_{i=0}^m a_i a_{m-i}\quad(m\geq0),
 \qquad \kappa_m=0\quad(m<0).
\]
The constant term in \eqref{eq:even-kernel-riccati} is
\[
 a_0=\frac{t(t+1)}2.
\]
For \(n\geq1\), its coefficient of \(x^n\) gives
\begin{equation}
 (4n+2)a_n
 =4\kappa_{n-1}-u\kappa_{n-2}
  +\bigl(tu-t+(n+1)u\bigr)a_{n-1}.
 \label{eq:raw-a-recurrence}
\end{equation}
Using the definition of $e_i$, we may rewrite these terms as
\[
 4\kappa_{n-1}-u\kappa_{n-2}
 =\sum_{i=0}^{n-2}a_i e_{n-2-i}+4a_0a_{n-1},
\]
so \eqref{eq:raw-a-recurrence} becomes
\begin{equation}
 (4n+2)a_n
 =\sum_{i=0}^{n-2}a_i e_{n-2-i}
  +\bigl(2t^2+t+tu+(n+1)u\bigr)a_{n-1}.
 \label{eq:positive-a-recurrence}
\end{equation}
Here and below an empty sum is zero.  Applying
\eqref{eq:raw-a-recurrence} with index \(n+1\), multiplying by four, and
subtracting \((4n+6)ua_n\) gives
\[
 (4n+6)e_n
 =4\bigl(4\kappa_n-u\kappa_{n-1}\bigr)
  +(4tu-4t+2u)a_n.
\]
Since
\[
 4\kappa_n-u\kappa_{n-1}
 =\sum_{i=0}^{n-1}a_i e_{n-1-i}+4a_0a_n,
\]
and \(4a_0=2t(t+1)\), we obtain
\begin{equation}
 (4n+6)e_n
 =4\sum_{i=0}^{n-1}a_i e_{n-1-i}
  +\bigl(8t^2+4t+4tu+2u\bigr)a_n
 \qquad(n\geq0).
 \label{eq:positive-e-recurrence}
\end{equation}
Now \(a_0\in\mathbb Q_{\geq0}[t,u]\);
\eqref{eq:positive-a-recurrence} at \(n=1\) gives \(a_1\geq0\), and
\eqref{eq:positive-e-recurrence} at \(n=0\) gives \(e_0\geq0\).
If \(a_0,\ldots,a_n\) and \(e_0,\ldots,e_{n-1}\) are coefficientwise
nonnegative for \(n\geq1\), then \eqref{eq:positive-a-recurrence} at
\(n+1\) gives \(a_{n+1}\geq0\), while
\eqref{eq:positive-e-recurrence} at \(n\) gives \(e_n\geq0\).
Simultaneous induction proves the lemma.
\end{proof}

\begin{proof}[Proof of \cref{thm:cycle-sign-coherence}]
Let
\[
 F_{\mathrm{even}}(x)=\sum_{N\geq0}Z_{\delta_{2N}}(t,-u)x^N,
 \qquad
 F_{\mathrm{odd}}(x)=\sum_{N\geq0}Z_{\delta_{2N+1}}(t,-u)x^N.
\]
By \cref{lem:staircase-generating-series}, \(F_{\mathrm{even}}=1/B\).
Hence \(F_{\mathrm{even}}'=AF_{\mathrm{even}}\) and
\(F_{\mathrm{even}}(0)=1\).  If
\(F_{\mathrm{even}}=\sum_{N\geq0}f_Nx^N\), then
\[
 (N+1)f_{N+1}=\sum_{i+j=N}a_i f_j.
\]
The lemma shows inductively that every \(f_N\) lies in
\(\mathbb Q_{\geq0}[t,u]\).

For the odd series, set \(J(x)=\mathcal O_{t,-u}(-x)\).  The local identity
\begin{equation*}
 (2m+1)\mathsf h_{2m+1}(t,-u)
 =(t-mu)\mathsf h_{2m}(t,-u)
\end{equation*}
gives, after summation with the kernel signs,
\[
 2xJ'(x)+J(x)=tB(x)-uxB'(x).
\]
Since \(F_{\mathrm{odd}}=J/B\) and \(B'/B=-A\), this is equivalent to
\begin{equation*}
 2xF_{\mathrm{odd}}'(x)+F_{\mathrm{odd}}(x)
 =2xA(x)F_{\mathrm{odd}}(x)+t+uxA(x).
\end{equation*}
Writing \(F_{\mathrm{odd}}=\sum_{N\geq0}o_Nx^N\), we obtain
\[
 o_0=t,\qquad
 (2N+1)o_N
 =2\sum_{i+j=N-1}a_i o_j+ua_{N-1}\quad(N\geq1).
\]
Another induction gives \(o_N\in\mathbb Q_{\geq0}[t,u]\).  We have therefore
proved
\[
 Z_{\delta_n}(t,-u)\in\mathbb Q_{\geq0}[t,u]\qquad(n\geq0).
\]

The case $n=0$ is immediate.  For $n\geq1$, put
\(Q_m(t,q)=m!\mathsf h_m(t,q)\in\mathbb Z[t,q]\).  The coarsening formula
gives, for every nonempty composition \(\alpha\models n\),
\[
 n!Z_\alpha(t,q)
 =\sum_{\beta\succeq\alpha}
 (-1)^{\ell(\alpha)-\ell(\beta)}
 \binom{n}{\beta_1,\ldots,\beta_{\ell(\beta)}}
 \prod_i Q_{\beta_i}(t,q)
 \in\mathbb Z[t,q].
\]
Applying this to $\alpha=\delta_n$ and combining it with the coefficientwise
nonnegativity over $\mathbb Q$ proved above gives
\eqref{eq:cycle-sign-coherence}.
\end{proof}

\subsection{Factorization defect and the support region}

We now connect the homology decomposition with the sign pattern in
\cref{thm:cycle-sign-coherence}.  Put
\[
 \ell_n=\ell(\delta_n)=\left\lceil\frac n2\right\rceil,
 \qquad
 H_k^{i,j}(n)=H_k(C_\bullet^{t,q}(\delta_n))_{(i,j)}.
\]
For a total decoration $\theta=(\sigma,\tau)\in\mathcal D_n$, define its
\emph{factorization defect} by
\begin{equation}
 \operatorname{fd}_n(\theta)
 :=c(\sigma)-|F_{\delta_n}(\theta)|-1.
 \label{eq:factorization-defect}
\end{equation}
For a basis element of $C_\bullet^{t,q}(\delta_n)$, multiply its local
decorations in block order and call the result its total decoration.  This
product is nonzero because every coarsening of $\delta_n$ has at most one odd
part.  For an
integer $d$, let $C_\bullet^{t,q}(\delta_n;d)$ be the span of the basis
elements whose total decoration has factorization defect $d$.  We will show
that these spaces are subcomplexes and vanish for $d<0$.  For $d\geq0$, set
\begin{equation}
 \mathcal H_{n,d}(t,u)
 :=\sum_{\substack{\theta\in\mathcal D_n\\
                    \operatorname{fd}_n(\theta)=d}}
 t^{\deg_t\theta}u^{\deg_q\theta}
 r_{\gamma_{\delta_n}(\theta)}.
 \label{eq:defect-layer}
\end{equation}
This is a homogeneous symmetric function of degree $n$, presented as a
nonnegative linear combination of ribbon Schur functions with coefficients in
$\mathbb N[t,u]$.

\begin{theorem}[Defect decomposition and support]
\label{thm:factorization-defect-layers}
Let $n\geq1$.  Every total decoration satisfies
$0\leq\operatorname{fd}_n(\theta)\leq\ell_n-1$.  The differential preserves
total decoration and gives a canonical decomposition into bigraded
$\mathfrak S_n$-subcomplexes
\begin{equation}
 C_\bullet^{t,q}(\delta_n)
 =\bigoplus_{d=0}^{\ell_n-1}C_\bullet^{t,q}(\delta_n;d).
 \label{eq:defect-chain-splitting}
\end{equation}
If $\theta=(\sigma,\tau)$ contributes to $H_k^{i,j}(n)$, then
\begin{equation}
 \operatorname{fd}_n(\theta)=\ell_n-k-j.
 \label{eq:defect-degree-relation}
\end{equation}
Consequently,
\begin{equation}
 H_k^{i,j}(n)=0
 \qquad\text{unless}\qquad
 k+j\leq\ell_n\leq i+j.
 \label{eq:homology-support-wedge}
\end{equation}
For every $d\geq0$,
\begin{align}
 \mathcal H_{n,d}(t,u)
 &=\sum_k\operatorname{ch}_{t,u}
       H_k(C_\bullet^{t,q}(\delta_n;d))
 \notag\\
 &=\sum_{\substack{k\geq1,\ j\geq0\\k+j=\ell_n-d}}
   u^j[q^j]\operatorname{ch}_{t,q}
   H_k(C_\bullet^{t,q}(\delta_n)),
 \label{eq:defect-layer-homology}\\
 \sum_k(-1)^{\ell_n-k}\operatorname{ch}_{t,-u}
       H_k(C_\bullet^{t,q}(\delta_n;d))
 &=(-1)^d\mathcal H_{n,d}(t,u).
 \label{eq:single-defect-euler}
\end{align}
Here $\operatorname{ch}_{t,u}$ and $\operatorname{ch}_{t,-u}$ mean
substitution of $q=u$ and $q=-u$, respectively.  Summing over $d$ gives
\begin{equation}
 \sum_{k=1}^{\ell_n}(-1)^{\ell_n-k}
 \operatorname{ch}_{t,-u}H_k(C_\bullet^{t,q}(\delta_n))
 =\sum_{d=0}^{\ell_n-1}(-1)^d\mathcal H_{n,d}(t,u).
 \label{eq:defect-euler-decomposition}
\end{equation}
If $\dim_n(f):=n![p_1^n]f$ denotes the dimension of the virtual
$\mathfrak S_n$-module with Frobenius characteristic $f$, then
\begin{equation}
 n!Z_{\delta_n}(t,-u)
 =\dim_n\!\left(\sum_{d=0}^{\ell_n-1}
                  (-1)^d\mathcal H_{n,d}(t,u)\right).
 \label{eq:defect-dimension-positivity}
\end{equation}
Thus the bivariate positivity theorem states that the scalar dimension of an
alternating sum of classes with nonnegative ribbon Schur expansions is coefficientwise
nonnegative.
\end{theorem}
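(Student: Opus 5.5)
The plan is to derive every assertion from the fiber decomposition of \cref{thm:total-decoration-ribbon-homology}, plus one combinatorial identity linking cycles of $\sigma$ to factorization cuts. That identity is the main step.

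\textbf{Key identity.} I will show
\[
 c(\sigma)-1\geq|F_{\delta_n}(\theta)|
 \quad\text{for every }\theta=(\sigma,\tau)\in\mathcal D_n,
\]
together with $c(\sigma)\leq\ell_n$.

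The cuts of $\delta_n$ are $c=2,4,\ldots$, and each has $c^+=c/2$. So the ordinary prefixes $[c^+]$ run through $[1],[2],\ldots,[\ell_n-1]$ as $c$ ranges over $D(\delta_n)$. If $c\in F_{\delta_n}(\theta)$, condition (a) makes $[c^+]$ $\sigma$-invariant. The selected cuts therefore split $[\ell_n]$ into $|F|+1$ nonempty $\sigma$-invariant intervals, and each interval contains at least one cycle of $\sigma$. This gives $c(\sigma)\geq|F|+1$, that is, $\operatorname{fd}_n\geq0$. The upper bound $\operatorname{fd}_n\leq\ell_n-1$ follows from $c(\sigma)\leq\ell_n=n^+$ together with $|F|\geq0$.

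\textbf{Splitting and the degree relation.} The differential preserves total decoration, by associativity of $\diamond$, as already used in \cref{lem:factorization-fiber-decomposition}. Since the defect is a function of the total decoration, each $C^{t,q}_\bullet(\delta_n;d)$ is a sum of fibers, hence a subcomplex. The nonnegativity just proved then gives \eqref{eq:defect-chain-splitting}.

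By \cref{thm:total-decoration-ribbon-homology}, the fiber of $\theta$ contributes homology only in degree $k=|F|+1$, with $j=\deg_q\theta=\ell_n-c(\sigma)$. Substituting both into the definition of $\operatorname{fd}_n$ yields \eqref{eq:defect-degree-relation}. Combined with $\operatorname{fd}_n\geq0$, this gives $k+j\leq\ell_n$.

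For the other inequality, use $i=\deg_t\theta=c(\sigma)+c(\tau)-1$. Since $c(\tau)\geq1$, we get $i\geq c(\sigma)$, so $i+j\geq c(\sigma)+\ell_n-c(\sigma)=\ell_n$. This proves \eqref{eq:homology-support-wedge}.

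\textbf{Characters.} For \eqref{eq:defect-layer-homology}, sum the fiber characters $t^{i}q^{j}r_{\gamma_{\delta_n}(\theta)}$ over the fibers with defect $d$, and regroup them by $(k,j)$ subject to $k+j=\ell_n-d$.

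For \eqref{eq:single-defect-euler}, note that a fiber in homological degree $k$ and $q$-degree $j$ contributes
\[
 (-1)^{\ell_n-k}(-u)^j=(-1)^{\ell_n-k-j}u^j=(-1)^d u^j
\]
after the substitution $q=-u$. So the sign is constant on each layer. Summing over $d$ gives \eqref{eq:defect-euler-decomposition}.

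Finally, \eqref{eq:defect-dimension-positivity} follows in three steps. By Euler--Poincar\'e on the finite complex, the shifted homology Euler characteristic equals the shifted chain Euler characteristic. By \eqref{eq:bigraded-euler}, taking dimensions $\dim_n$ of the chain-level characters $h_\beta$ gives $\Phi_{\delta_n,k}$, and the alternating sum of these is $n!Z_{\delta_n}(t,q)$. Substituting $q=-u$ then finishes the argument.

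No step seems hard beyond the interval-counting argument. The one point to check carefully there is that all cuts of $\delta_n$ are even, so that $c\mapsto c^+$ is injective onto $[\ell_n-1]$. This holds in both parities of $n$, because the odd part of $\delta_{2N+1}$ is last.
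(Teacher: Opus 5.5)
Your proposal is correct and follows essentially the same route as the paper. Both arguments use the interval-counting bound $c(\sigma)\geq|F_{\delta_n}(\theta)|+1$ and preservation of total decoration by the faces. Both read off $k$, $j$, and $i$ from the fiber decomposition of \cref{thm:total-decoration-ribbon-homology}, use a constant Euler sign on each layer, and obtain the dimension identity from Euler--Poincar\'e together with \eqref{eq:bigraded-euler}. Your explicit check that $c\mapsto c^+$ maps $D(\delta_n)$ injectively onto $[\ell_n-1]$ is a point the paper leaves implicit.
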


\begin{proof}
For a total decoration $\theta=(\sigma,\tau)$, put
$k_\theta=|F_{\delta_n}(\theta)|+1$.  Factorization at these cuts partitions
the ordinary alphabet $[\ell_n]$ into $k_\theta$ nonempty intervals, each
invariant under $\sigma$.  Every interval contains at least one cycle, so
$c(\sigma)\geq k_\theta$ and
$\operatorname{fd}_n(\theta)=c(\sigma)-k_\theta\geq0$.  Also
$c(\sigma)\leq\ell_n$ and $k_\theta\geq1$, which gives the upper bound
$\operatorname{fd}_n(\theta)\leq\ell_n-1$.

Every nonzero face multiplies two adjacent local decorations, so it preserves
their iterated product and hence the total decoration.  The external
$\mathfrak S_n$-action changes only block labels and preserves it as well.
The spaces $C_\bullet^{t,q}(\delta_n;d)$ are therefore bigraded
$\mathfrak S_n$-subcomplexes, and the preceding bounds give
\eqref{eq:defect-chain-splitting}.  By
\cref{thm:total-decoration-ribbon-homology}, the fiber indexed by $\theta$
contributes in homological degree $k=k_\theta$.  Since
$j=\ell_n-c(\sigma)$, this proves \eqref{eq:defect-degree-relation} and the
first inequality in \eqref{eq:homology-support-wedge}.  The other inequality follows from
\[
 i+j=c(\sigma)+c(\tau)-1+\ell_n-c(\sigma)
     =\ell_n+c(\tau)-1\geq\ell_n.
\]

Formula \eqref{eq:total-decoration-homology-character}, grouped by the value
of $\operatorname{fd}_n$, gives \eqref{eq:defect-layer-homology} and also
shows that no negative-defect summand occurs.  A summand in homological degree
$k$ and $q$-degree $j$ has Euler sign
\[
 (-1)^{\ell_n-k}(-1)^j
 =(-1)^{\ell_n-k-j}
 =(-1)^{\operatorname{fd}_n(\theta)}.
\]
The sign is therefore constant on the homology of each defect subcomplex,
which proves \eqref{eq:single-defect-euler} and
\eqref{eq:defect-euler-decomposition}.  Applying $\dim_n$ and using
Euler--Poincar\'e together with \eqref{eq:bigraded-euler} gives
\eqref{eq:defect-dimension-positivity}.
\end{proof}

\begin{remark}[A non-Schur-positive virtual class]
The virtual Frobenius characteristic in
\eqref{eq:defect-euler-decomposition} need not be coefficientwise
Schur-positive.  For $n=4$, direct substitution in the exact-cut formula gives
\[
 \sum_{k=1}^{2}(-1)^{2-k}
 \operatorname{ch}_{t,-u}H_k(C_\bullet^{t,q}(\delta_4))
 =t^2(t+1)^2r_{(2,2)}
  +t(t+1)\bigl(u(t+2)-t\bigr)r_{(4)}.
\]
Since $r_{(4)}=s_{(4)}$ and
$r_{(2,2)}=s_{(3,1)}+s_{(2,2)}$, the coefficient of $s_{(4)}$ is
$t(t+1)(u(t+2)-t)$ and is not coefficientwise nonnegative.  Thus the virtual
class itself need not be coefficientwise Schur-positive, although its
dimension is coefficientwise nonnegative by
\cref{thm:cycle-sign-coherence}.
\end{remark}

The defect-zero layer is the edge $k+j=\ell_n$ of
\eqref{eq:homology-support-wedge}.  It has an explicit Frobenius formula.  If
$\lambda=(\lambda_1,\ldots,\lambda_r)$ is a composition, write
\[
 2\lambda=(2\lambda_1,\ldots,2\lambda_r),
 \qquad
 2\lambda-\mathbf e_r
 =(2\lambda_1,\ldots,2\lambda_{r-1},2\lambda_r-1).
\]

\begin{corollary}[The defect-zero edge]
\label[corollary]{cor:defect-zero-edge}
For $N\geq1$,
\begin{equation}
 \mathcal H_{2N,0}(t,u)
 =\sum_{\lambda\models N}
   u^{N-\ell(\lambda)}
   \left(\prod_{a=1}^{\ell(\lambda)}
     (\lambda_a-1)!(t)_{\lambda_a+1}\right)
   r_{2\lambda}.
 \label{eq:even-defect-zero-edge}
\end{equation}
For $N\geq0$,
\begin{equation}
 \mathcal H_{2N+1,0}(t,u)
 =\sum_{\lambda\models N+1}
   u^{N+1-\ell(\lambda)}
   \left(\prod_{a=1}^{\ell(\lambda)}(\lambda_a-1)!\right)
   \left(\prod_{a=1}^{\ell(\lambda)-1}(t)_{\lambda_a+1}\right)
   (t)_{\lambda_{\ell(\lambda)}}
   r_{2\lambda-\mathbf e_{\ell(\lambda)}}.
 \label{eq:odd-defect-zero-edge}
\end{equation}
Because the $u$-degree determines the homological degree on this edge,
these two identities determine every representation with $k+j=\ell_n$.
\end{corollary}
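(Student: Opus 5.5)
The plan is to classify the defect-zero total decorations directly and then read off \eqref{eq:even-defect-zero-edge} and \eqref{eq:odd-defect-zero-edge} from the definition \eqref{eq:defect-layer}. Let $\theta=(\sigma,\tau)\in\mathcal D_n$ with $F=F_{\delta_n}(\theta)$ and $k_\theta=|F|+1$. By \cref{lem:decoration-cut-factorization}, $\theta$ factors uniquely as $\theta=\theta_1\diamond\cdots\diamond\theta_{k_\theta}$ along $F$. By the proof of \cref{thm:factorization-defect-layers}, each ordinary interval $I_a$ is $\sigma$-invariant and carries at least one cycle. Hence $\operatorname{fd}_n(\theta)=0$ exactly when every $\sigma_a=\sigma|_{I_a}$ (translated) is a single cycle on its interval. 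The main step is the converse, which I expect to be the only real point.

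\emph{Claim.} Take any sequence of pieces $\theta_a=(\sigma_a,\tau_a)$ whose sizes form a composition $\gamma\succeq\delta_n$, where each $\sigma_a$ is a full cycle on $[\beta_a^+]$ and each $\tau_a$ is an arbitrary rooted permutation. Then the product $\theta=\theta_1\diamond\cdots\diamond\theta_r$ satisfies $F_{\delta_n}(\theta)=D(\gamma)$ exactly, and $\operatorname{fd}_n(\theta)=0$.

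\emph{Proof plan.} By \eqref{eq:factorization-cut-locality}, applied iteratively, $F_{\delta_n}(\theta)$ is $D(\gamma)$ together with the translated internal factorization cuts of the pieces. A full cycle has no proper invariant prefix $[c^+]$ with $0<c^+<\beta_a^+$, so condition (a) fails at every internal cut, whatever $\tau_a$ is. One small check is needed here: an internal cut $c$ of an even piece $2\lambda_a$ satisfies $0<c^+<\lambda_a$ because $c$ is even. So no internal cuts occur, and we get a bijection
\[
 \{\text{defect-zero }\theta\}
 \;\longleftrightarrow\;
 \bigsqcup_{\gamma\succeq\delta_n}\ \prod_a\bigl(\{\text{full cycles in }\mathfrak S_{\gamma_a^+}\}\times\mathfrak R_{\gamma_a^-}\bigr),
\]
with $\gamma_{\delta_n}(\theta)=\gamma$.

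Next I weight the pieces, using the degree additivity of $\diamond$. A piece of size $2m$ has $\sigma_a\in\mathfrak S_m$ a full cycle: there are $(m-1)!$ of them, each of $q$-degree $m-1$. Its $t$-degree is $c(\tau_a)$. Summing $t^{c(\tau)}$ over $\mathfrak R_m$ gives $t(t+1)\cdots(t+m)=(t)_{m+1}$, by the rooted enumerator computed before \eqref{eq:Q-factorization}. For $n=2N$ the coarsenings of $\delta_{2N}$ are exactly $2\lambda$ with $\lambda\models N$, and the total $u$-degree is $\sum_a(\lambda_a-1)=N-\ell(\lambda)$. This gives \eqref{eq:even-defect-zero-edge}.

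For $n=2N+1$ the coarsenings are $2\lambda-\mathbf e_{\ell(\lambda)}$ with $\lambda\models N+1$. The last, odd piece has ordinary alphabet of size $\lambda_\ell$ and rooted alphabet of size $\lambda_\ell-1$. Its factor is therefore $(\lambda_\ell-1)!\,(t)_{\lambda_\ell}$, and the $u$-degree is again $\sum_a(\lambda_a-1)=N+1-\ell(\lambda)$. This gives \eqref{eq:odd-defect-zero-edge}.

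Finally, the last sentence of the corollary follows from \eqref{eq:defect-degree-relation}. On the edge $k+j=\ell_n$, the $u$-exponent $j$ fixes $k=\ell_n-j$. So by \eqref{eq:defect-layer-homology}, extracting $[u^j]$ from $\mathcal H_{n,0}$ gives $\operatorname{ch}_tH_k^{\bullet,j}(n)$ for that $k$.
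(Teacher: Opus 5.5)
Your proposal is correct and follows essentially the same route as the paper. You classify defect-zero decorations as products of pieces whose ordinary components are single cycles on the factor intervals (a single cycle has no proper invariant prefix, so no extra cuts arise), then weight each piece by $(\lambda_a-1)!\,u^{\lambda_a-1}$ times the rooted cycle enumerator. Your use of degree additivity for the $t$-weight is equivalent to the paper's observation that gluing the $k$ rooted factors loses exactly $k-1$ cycles, which the $k$ ordinary cycles compensate.
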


\begin{proof}
Fix an exact cut set of size $k-1$.  In the even case it determines a
composition $\lambda\models N$ of length $k$ and the ribbon composition
$2\lambda$.  The corresponding ordinary factor intervals have sizes
$\lambda_1,\ldots,\lambda_k$.  Defect zero means that the ordinary
permutation has exactly one cycle on each interval, giving
$(\lambda_a-1)!$ choices on the $a$-th interval.  A single cycle has no
proper invariant prefix, so these choices introduce no additional
factorization cuts.

The rooted factor on the $a$-th interval is arbitrary and has cycle
enumerator $(t)_{\lambda_a+1}$.  Since the ordinary component has $k$ cycles
and gluing the $k$ rooted factors decreases their total number of cycles by
$k-1$, the global $t$-weight is the product of these rooted cycle
enumerators.  The $q$-degree is $N-k$.  Summing over all exact cut sets proves
\eqref{eq:even-defect-zero-edge}.

In the odd case, an exact cut set determines a composition
$\lambda\models N+1$ of length $k$ and the ribbon composition
$2\lambda-\mathbf e_k$.  The ordinary interval sizes are again the
$\lambda_a$.  The first $k-1$ rooted intervals have sizes $\lambda_a$, while
the last has size $\lambda_k-1$.  Their cycle enumerators are respectively
$(t)_{\lambda_a+1}$ and $(t)_{\lambda_k}$, and the $q$-degree is $N+1-k$.
The same argument proves \eqref{eq:odd-defect-zero-edge}.
\end{proof}

The endpoint $k=1$ gives the maximal $u$-degree.  Applying the dimension map
to the single ribbon $r_{(n)}=h_n$ yields
\begin{align}
 [u^{N-1}](2N)!Z_{\delta_{2N}}(t,-u)
 &=(N-1)!(t)_{N+1} &&(N\geq1),
 \label{eq:even-extreme-u-coefficient}\\
 [u^N](2N+1)!Z_{\delta_{2N+1}}(t,-u)
 &=N!(t)_{N+1} &&(N\geq0).
 \label{eq:odd-extreme-u-coefficient}
\end{align}

\subsection{Logarithmic positivity of the even series}

Write
\[
 F_{\mathrm{even}}(x)
 :=\sum_{N\geq0}Z_{\delta_{2N}}(t,-u)x^N
 =B(x)^{-1}.
\]
The logarithmic derivative used in the positivity proof also controls the
logarithm of this series.

\begin{proposition}[Positive logarithmic coefficients]
\label{prop:connected-circular-deformation}
For $N\geq1$, define
\begin{equation}
 \Lambda_N(t,u)=\frac{(2N)!}{N}a_{N-1}(t,u),
 \label{eq:connected-circular-polynomial}
\end{equation}
where $-B'(x)/B(x)=\sum_{r\geq0}a_r(t,u)x^r$.  Then
\begin{equation}
 \Lambda_N(t,u)\in\mathbb N[t,u],
 \qquad
 F_{\mathrm{even}}(x)
 =\exp\!\left(\sum_{N\geq1}
       \Lambda_N(t,u)\frac{x^N}{(2N)!}\right).
 \label{eq:connected-even-exponential-formula}
\end{equation}
At $u=1$,
\begin{equation}
 \Lambda_N(t,1)=\frac{(2N)!}{N}\,\Omega(C_{2N}^{\circ};t),
 \label{eq:connected-circular-specialization}
\end{equation}
where $C_{2N}^{\circ}$ is the alternating circular fence with sign word
$(+,-)^N$, with $C_2^{\circ}$ understood as the two-element chain.
\end{proposition}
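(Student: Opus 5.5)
The exponential formula is formal. Since $F_{\mathrm{even}}=B^{-1}$ and $B(0)=1$, we have
\[
 \frac{d}{dx}\log F_{\mathrm{even}}(x)=-\frac{B'(x)}{B(x)}=\sum_{r\geq0}a_r(t,u)x^r .
\]
Integrating termwise with zero constant term gives $\log F_{\mathrm{even}}=\sum_{N\geq1}a_{N-1}x^N/N$. With the definition \eqref{eq:connected-circular-polynomial}, this is exactly \eqref{eq:connected-even-exponential-formula}.

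Nonnegativity of the rational coefficients of $\Lambda_N$ follows at once from \cref{lem:bivariate-log-derivative-positivity}, which gives $a_{N-1}\in\mathbb Q_{\geq0}[t,u]$. It therefore remains to show $\Lambda_N\in\mathbb Z[t,u]$, and I would argue integrality as follows. Substitute $x=y^2$ and write
\[
 B(y^2)=\sum_{a\geq0}(-1)^aQ_{2a}(t,-u)\frac{y^{2a}}{(2a)!},
\]
where $Q_{2a}=(2a)!\,\mathsf h_{2a}\in\mathbb Z[t,u]$ by \eqref{eq:Q-factorization}. This is an exponential generating function in $y$ with integral coefficients and constant term $1$. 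The logarithm of such a series again has integral exponential coefficients, by the standard recurrence
\[
 \ell_n=g_n-\sum_{k=1}^{n-1}\binom{n-1}{k-1}\ell_k\,g_{n-k},
\]
whose right-hand side involves only integers. Since $-\log B(y^2)=\sum_N\Lambda_N\,y^{2N}/(2N)!$, each $\Lambda_N$ is an exponential coefficient of this logarithm and hence integral. Integrality together with nonnegativity gives $\Lambda_N\in\mathbb N[t,u]$.

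For \eqref{eq:connected-circular-specialization}, the claim is $a_{N-1}(t,1)=\Omega(C_{2N}^{\circ};t)$. Since both sides are polynomials, it suffices to prove this at every positive integer $t=m$. I would use a transfer matrix. Let $M$ be the $m\times m$ matrix with
\[
 M_{ij}=\#\{k\in[m]:k\geq\max(i,j)\}=m+1-\max(i,j),
\]
which counts the choices of a peak value above two adjacent valley values. Then $\Omega(C_{2N}^{\circ};m)=\operatorname{tr}(M^N)$, with $N=1$ giving $\sum_i(m+1-i)=\binom{m+1}2=a_0(m,1)$, consistent with the two-element-chain convention. The identity
\[
 \sum_{N\geq1}\operatorname{tr}(M^N)x^{N-1}=-\frac{d}{dx}\log\det(I-xM)
\]
reduces the claim to $\det(I-xM)=B(x)|_{t=m,u=1}$. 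By \cref{lem:staircase-generating-series} and \eqref{eq:even-kernel}, the right-hand side is $\sum_a\binom{m+a}{2a}(-x)^a$. So it suffices to show that the sum of all $a\times a$ principal minors of $M$ equals $\binom{m+a}{2a}$.

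This principal-minor identity is the main obstacle. I would first try Lindström--Gessel--Viennot: $M$ factors as $LL^{\mathsf T}$, where $L$ is the lower-triangular all-ones matrix reversed appropriately. Cauchy--Binet then writes each principal minor as a sum of squares of $a\times a$ minors of $L$, which are $0/1$-valued interlacing indicators. The total should count interlacing chains $i_1\leq j_1<i_2\leq j_2<\cdots$ in a suitably shifted range, and these are counted by $\binom{m+a}{2a}$ after the standard shift $j_s\mapsto j_s+s$, $i_s\mapsto i_s+s-1$.

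As a fallback, apply the same trace argument to the linear fence. It gives $\Omega(P_{2N};m)=\mathbf 1^{\mathsf T}M^{N-1}\mathbf v$, and then the $u=1$ case of \cref{lem:staircase-generating-series} together with \cref{prop:staircase-specialization} pins down $\det(I-xM)$ via the resolvent. One could also cite the circular-fence treatment of \cite{FerroniMoralesPanova}.
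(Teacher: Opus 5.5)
Your proposal is correct and follows the paper's proof in all essentials. The common steps are: integration of $-B'/B$ for the exponential formula; the Riccati lemma for nonnegativity; integrality from the logarithm of an integral exponential generating function; and the transfer-matrix identity $\det(I-xM)=B(x)|_{t=m,u=1}$ combined with $\sum_{N\geq1}\operatorname{tr}(M^N)x^{N-1}=-\frac{d}{dx}\log\det(I-xM)$.

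The differences are minor:
\begin{enumerate}[label=\textup{(\arabic*)},leftmargin=*]
\item For integrality you take the logarithm of $B(y^2)$ directly from the integral $Q_{2a}$, whereas the paper takes the logarithm of $F_{\mathrm{even}}(z^2)$ and uses integrality of $(2r)!Z_{\delta_{2r}}(t,-u)$.
\item Your valley transfer matrix $M=[m+1-\max(i,j)]$ is the paper's peak matrix $K_m=[\min(i,j)]$ conjugated by the reversal permutation.
\item For the principal-minor sum, your Cauchy--Binet/interlacing count replaces the paper's explicit minor formula $s_1(s_2-s_1)\cdots(s_r-s_{r-1})$ and its generating-function summation. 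Your count is correct: the minors are $0/1$ interlacing indicators, and your shift gives $\binom{m+a}{2a}$. So the fallback is not needed.
\end{enumerate}
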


\begin{proof}
Since $F_{\mathrm{even}}=B^{-1}$, integration gives
\[
 \log F_{\mathrm{even}}(x)
 =\sum_{N\geq1}\frac{a_{N-1}(t,u)}{N}x^N,
\]
which proves the exponential formula.  By
\cref{lem:bivariate-log-derivative-positivity}, the polynomials
$\Lambda_N$ have nonnegative rational coefficients.

For integrality, put
\[
 M_{2r}(t,u)=(2r)!Z_{\delta_{2r}}(t,-u),\qquad
 M_{2r+1}(t,u)=0,\qquad M_0(t,u)=1.
\]
\Cref{thm:cycle-sign-coherence} gives $M_s\in\mathbb Z[t,u]$.  The
moment--cumulant formula applied to
\[
 F_{\mathrm{even}}(z^2)=\sum_{s\geq0}M_s(t,u)\frac{z^s}{s!}
\]
gives
\[
 \Lambda_N(t,u)=
 \sum_{\substack{\mathcal Q\text{ a set partition of }[2N]\\
                   |D|\text{ even for every }D\in\mathcal Q}}
 (-1)^{|\mathcal Q|-1}(|\mathcal Q|-1)!
 \prod_{D\in\mathcal Q}M_{|D|}(t,u).
\]
Thus $\Lambda_N\in\mathbb Z[t,u]$, and coefficientwise nonnegativity over
$\mathbb Q$ proves $\Lambda_N\in\mathbb N[t,u]$.

It remains to prove \eqref{eq:connected-circular-specialization}.  For a
positive integer $m$, let $K_m=[\min(i,j)]_{i,j\in[m]}$.  A principal minor
indexed by $1\leq s_1<\cdots<s_r\leq m$ equals
\[
 s_1(s_2-s_1)\cdots(s_r-s_{r-1}).
\]
With $d_1=s_1$, $d_i=s_i-s_{i-1}$ for $2\leq i\leq r$, and
$d_{r+1}=m+1-s_r$, the sum of these minors is
\[
 [z^{m+1}]
 \left(\frac{z}{(1-z)^2}\right)^r\frac{z}{1-z}
 =\binom{m+r}{2r}.
\]
It follows that
\[
 B(x)|_{t=m,u=1}=\det(I_m-xK_m),
\]
and hence
\[
 -\frac{B'(x)}{B(x)}\bigg|_{t=m,u=1}
 =\sum_{N\geq1}\operatorname{tr}(K_m^N)x^{N-1}.
\]
To evaluate the trace, assign values $y_1,\ldots,y_N\in[m]$ to the maximal
elements of $C_{2N}^{\circ}$.  The minimal element between $y_{i-1}$ and
$y_i$ has $\min(y_{i-1},y_i)$ possible values, with cyclic indices.  Therefore
\[
 \Omega(C_{2N}^{\circ};m)=\operatorname{tr}(K_m^N).
\]
Both sides are polynomials in $m$, so interpolation gives
$a_{N-1}(t,1)=\Omega(C_{2N}^{\circ};t)$ and proves
\eqref{eq:connected-circular-specialization}.
\end{proof}

\section{Conclusion}
\label{sec:conclusion}

Total permutation decorations provide a common structure for the homological
and enumerative results.  Their simultaneous factorization cuts identify the
classical ribbon complex in each fiber and are respected by the comonoid
cut coproduct on labelled complexes.  This yields all bigraded homology representations, cut-induced
injections at factorization cuts, and the identification of
cut primitives with $H_1$.

For zigzag compositions, the excess of ordinary cycles over factorization
intervals is the nonnegative defect
$\lceil n/2\rceil-k-j$.  It determines the homological support, fixes the Euler
sign on each defect layer, and leads to explicit Frobenius formulas on the
defect-zero edge.  The Riccati recurrences prove coefficientwise positivity
of the reflection-length refinement and its logarithmic coefficients.  The
$n=4$ virtual class shows that the scalar positivity does not generally lift
to coefficientwise Schur positivity before taking dimensions.

\section*{Acknowledgments}
The author thanks Professor Yanpeng Li for helpful discussions and comments on
the organization of the proofs.

\section*{Use of generative AI}
Generative-AI tools (OpenAI ChatGPT and Codex, August 2026) were used for
language editing, notation and cross-reference checks, bibliographic
verification, and drafting finite verification scripts.  No computer output
is used as a proof.  The author independently checked every mathematical
statement and assumes full responsibility for the manuscript.

\bibliographystyle{abbrvnat}
\bibliography{references_jca_focused}

@article{AguiarBergeronSottile,
  author  = {Aguiar, Marcelo and Bergeron, Nantel and Sottile, Frank},
  title   = {Combinatorial {H}opf algebras and generalized {D}ehn--{S}ommerville relations},
  journal = {Compositio Mathematica},
  volume  = {142},
  number  = {1},
  year    = {2006},
  pages   = {1--30},
  doi     = {10.1112/S0010437X0500165X}
}

@article{EhrenborgJung2013,
  author  = {Ehrenborg, Richard and Jung, JiYoon},
  title   = {The topology of restricted partition posets},
  journal = {Journal of Algebraic Combinatorics},
  volume  = {37},
  number  = {4},
  pages   = {643--666},
  year    = {2013},
  doi     = {10.1007/s10801-012-0379-8}
}

@misc{FerroniMoralesPanova,
  author        = {Ferroni, Luis and Morales, Alejandro H. and Panova, Greta},
  title         = {Skew shapes, {E}hrhart positivity and beyond},
  year          = {2025},
  eprint        = {2503.16403},
  archivePrefix = {arXiv},
  primaryClass  = {math.CO},
  note          = {To appear in Proceedings of the London Mathematical Society; arXiv:2503.16403v3},
  doi           = {10.48550/arXiv.2503.16403}
}

@article{GelfandEtAlNSym,
  author  = {Gelfand, Israel M. and Krob, Daniel and Lascoux, Alain and Leclerc, Bernard and Retakh, Vladimir S. and Thibon, Jean-Yves},
  title   = {Noncommutative symmetric functions},
  journal = {Advances in Mathematics},
  volume  = {112},
  number  = {2},
  pages   = {218--348},
  year    = {1995},
  doi     = {10.1006/aima.1995.1032}
}

@article{GesselReutenauer1993,
  author  = {Gessel, Ira M. and Reutenauer, Christophe},
  title   = {Counting permutations with given cycle structure and descent set},
  journal = {Journal of Combinatorial Theory, Series A},
  volume  = {64},
  number  = {2},
  pages   = {189--215},
  year    = {1993},
  doi     = {10.1016/0097-3165(93)90095-P}
}

@article{HivertNovelliThibon2008,
  author  = {Hivert, Florent and Novelli, Jean-Christophe and Thibon, Jean-Yves},
  title   = {Commutative combinatorial {H}opf algebras},
  journal = {Journal of Algebraic Combinatorics},
  volume  = {28},
  number  = {1},
  pages   = {65--95},
  year    = {2008},
  doi     = {10.1007/s10801-007-0077-0}
}

@misc{HershSundaram2026,
  author        = {Hersh, Patricia and Sundaram, Sheila},
  title         = {Stability and ribbon bases for the rank-selected homology of geometric lattices},
  year          = {2026},
  eprint        = {2604.06479},
  archivePrefix = {arXiv},
  primaryClass  = {math.CO},
  note          = {arXiv:2604.06479v2},
  doi           = {10.48550/arXiv.2604.06479}
}

@misc{KahaneFenceCoefficients,
  author        = {Kahane, Yakob},
  title         = {Combinatorial interpretation of the coefficients of the order polynomial of fence posets},
  year          = {2026},
  eprint        = {2607.11225},
  archivePrefix = {arXiv},
  primaryClass  = {math.CO},
  note          = {arXiv:2607.11225v1},
  doi           = {10.48550/arXiv.2607.11225}
}

@misc{HuangGreedyRecords2026,
  author        = {Huang, Pyuyi Chufeng},
  title         = {{B}ernstein Transfers and Greedy Records for Fence and Circular-Fence Order Polynomials},
  year          = {2026},
  eprint        = {2607.22767},
  archivePrefix = {arXiv},
  primaryClass  = {math.CO},
  note          = {arXiv:2607.22767v2},
  doi           = {10.48550/arXiv.2607.22767}
}

@article{KellerVandeBogert2025,
  author  = {VandeBogert, Keller},
  title   = {Ribbon {S}chur functors},
  journal = {Algebra \& Number Theory},
  volume  = {19},
  number  = {4},
  pages   = {771--834},
  year    = {2025},
  doi     = {10.2140/ant.2025.19.771}
}

@misc{AlmousaLu2026,
  author        = {Almousa, Ayah and Lu, Bryan},
  title         = {Ribbon complexes for the $0$-{H}ecke algebra},
  year          = {2026},
  eprint        = {2601.13324},
  archivePrefix = {arXiv},
  primaryClass  = {math.CO},
  note          = {arXiv:2601.13324v1},
  doi           = {10.48550/arXiv.2601.13324}
}

@article{Kreweras1965,
  author    = {Kreweras, Germain},
  title     = {Sur une classe de probl\`emes de d\'enombrement li\'es au treillis des partitions des entiers},
  journal   = {Cahiers du Bureau universitaire de recherche op\'erationnelle, S\'erie Recherche},
  volume    = {6},
  pages     = {9--107},
  year      = {1965},
  language  = {French},
  url       = {https://www.numdam.org/item/BURO_1965__6__9_0/}
}

@book{NISTHandbook2010,
  editor    = {Olver, Frank W. J. and Lozier, Daniel W. and Boisvert, Ronald F. and Clark, Charles W.},
  title     = {{NIST} Handbook of Mathematical Functions},
  publisher = {Cambridge University Press},
  address   = {Cambridge},
  year      = {2010},
  url       = {https://dlmf.nist.gov/15}
}

@book{StanleyEC1,
  author    = {Stanley, Richard P.},
  title     = {Enumerative Combinatorics, Volume 1},
  edition   = {Second},
  series    = {Cambridge Studies in Advanced Mathematics},
  volume    = {49},
  publisher = {Cambridge University Press},
  year      = {2012},
  doi       = {10.1017/CBO9781139058520}
}

@article{Stanley1982GroupsPosets,
  author  = {Stanley, Richard P.},
  title   = {Some aspects of groups acting on finite posets},
  journal = {Journal of Combinatorial Theory, Series A},
  volume  = {32},
  number  = {2},
  pages   = {132--161},
  year    = {1982},
  doi     = {10.1016/0097-3165(82)90017-6}
}

@incollection{Wachs2007,
  author    = {Wachs, Michelle L.},
  title     = {Poset topology: tools and applications},
  booktitle = {Geometric Combinatorics},
  series    = {IAS/Park City Mathematics Series},
  volume    = {13},
  pages     = {497--615},
  publisher = {American Mathematical Society},
  address   = {Providence, RI},
  year      = {2007},
  doi       = {10.1090/pcms/013/09}
}

@article{BergeronKrob1997,
  author  = {Bergeron, Fran\c{c}ois and Krob, Daniel},
  title   = {Acyclic Complexes Related to Noncommutative Symmetric Functions},
  journal = {Journal of Algebraic Combinatorics},
  volume  = {6},
  number  = {2},
  pages   = {103--117},
  year    = {1997},
  doi     = {10.1023/A:1008622519966}
}

@article{NovelliThibonToumazet2020,
  author  = {Novelli, Jean-Christophe and Thibon, Jean-Yves and Toumazet, Fr\'{e}d\'{e}ric},
  title   = {A Noncommutative Cycle Index and New Bases of Quasi-Symmetric Functions and Noncommutative Symmetric Functions},
  journal = {Annals of Combinatorics},
  volume  = {24},
  number  = {3},
  pages   = {557--576},
  year    = {2020},
  doi     = {10.1007/s00026-020-00504-5}
}

@book{AguiarMahajan2010,
  author    = {Aguiar, Marcelo and Mahajan, Swapneel},
  title     = {Monoidal Functors, Species and Hopf Algebras},
  series    = {CRM Monograph Series},
  volume    = {29},
  publisher = {American Mathematical Society},
  address   = {Providence, RI},
  year      = {2010},
  isbn      = {978-0-8218-4776-3},
  doi       = {10.1090/crmm/029}
}

\end{document}